\def\RR{{\mathbb R}}    
\def\11{{\mathbf 1}}    
\def\bfx{{\bf x}} \def\bfy{{\bf y}}  
 \def\bfK{{\bf K}}
\def\bfb{{\bf b}}
\def\bfv{{\bf v}}
\def\bfX{{\bf X}} \def\bfY{{\bf Y}} 
\newtheorem*{theorem*}{Theorem}
\newtheorem{theorem}{Theorem}
\newtheorem*{proposition*}{Proposition}
\newtheorem{proposition}{Proposition}
\definecolor{colorA}{rgb}{0.2980392156862745, 0.4470588235294118, 0.6901960784313725    }
\definecolor{colorB}{rgb}{0.8666666666666667, 0.5176470588235295, 0.3215686274509804}
\definecolor{colorC}{rgb}{0.3333333333333333, 0.6588235294117647, 0.40784313725490196 }
\definecolor{colorD}{rgb}{0.7686274509803922, 0.3058823529411765, 0.3215686274509804  }
\definecolor{colorE}{rgb}{0.5058823529411764, 0.4470588235294118, 0.7019607843137254}
\definecolor{colorF}{rgb}{0.5764705882352941, 0.47058823529411764, 0.3764705882352941 }
\definecolor{colorG}{rgb}{0.8549019607843137, 0.5450980392156862, 0.7647058823529411}
\definecolor{colorH}{rgb}{0.5490196078431373, 0.5490196078431373, 0.5490196078431373}
\definecolor{colorI}{rgb}{0.8, 0.7254901960784313, 0.4549019607843137}
\definecolor{colorJ}{rgb}{0.39215686274509803, 0.7098039215686275, 0.803921568627451}
\DeclareRobustCommand\sampleline[1]{%
  \tikz\draw[#1] (0,0) (0,\the\dimexpr\fontdimen22\textfont2\relax)
  -- (2em,\the\dimexpr\fontdimen22\textfont2\relax);%
}
\DeclareRobustCommand\samplelinedashed[1]{%
  \tikz\draw[dashed][#1] (0,0) (0,\the\dimexpr\fontdimen22\textfont2\relax)
  -- (2em,\the\dimexpr\fontdimen22\textfont2\relax);%
}
\DeclareRobustCommand\samplesquare[1]{%
\tikz{\node[draw,scale=0.75,regular polygon, regular polygon sides=4,fill=#1](){};}
}
\title{Giga-scale Kernel Matrix-Vector Multiplication on GPU}
\author{%
  Robert Hu\\
  Amazon \thanks{Work mainly done while the authors were with the Department of Statistics, University of Oxford.}\\
  \texttt{robyhu@amazon.co.uk} \\
  \And
  Siu Lun Chau \\
  Department of Statistics \\
  University of Oxford \\
  \texttt{siu.chau@stats.ox.ac.uk} \\
  \And
  Dino Sejdinovic  \\
 School of Computer and Mathematical Sciences \\
  University of Adelaide $^*$\\
  \texttt{dino.sejdinovic@adelaide.edu.au} 
    \And
  Joan Alexis Glaunès \\
    MAP5\\
  Université Paris Descartes \\
  \texttt{alexis.glaunes@mi.parisdescartes.fr} 
}
\begin{document}

\maketitle

\newcommand{\shortparagraph}[1]{\textbf{#1}\quad}

\def\fhalfm{{\text{F}^{2.5}\text{M}}}
\def\fthreem{{\text{F}^{3}\text{M}}}

\begin{abstract}


Kernel matrix-vector multiplication (KMVM) is a foundational operation in machine learning and scientific computing. However, as KMVM tends to scale quadratically in both memory and time, applications are often limited by these computational constraints. In this paper, we propose a novel approximation procedure coined \textit{Faster-Fast and Free Memory Method} ($\fthreem$) to address these scaling issues of KMVM for tall~($10^8\sim 10^9$) and skinny~($D\leq7$) data. Extensive experiments demonstrate that $\fthreem$ has empirical \emph{linear time and memory} complexity with a relative error of order $10^{-3}$ and can compute a full KMVM for a billion points \emph{in under a minute} on a high-end GPU, leading to a significant speed-up in comparison to existing CPU methods. We demonstrate the utility of our procedure by applying it as a drop-in for the state-of-the-art GPU-based linear solver FALKON, \emph{improving speed 1.5-5.5 times} at the cost of $<1\%$ drop in accuracy. We further demonstrate competitive results on \emph{Gaussian Process regression} coupled with significant speedups on a variety of real-world datasets.
\end{abstract}

\vspace{-0.4cm}
\section{Introduction}
\vspace{-0.2cm} 
\label{sec:intro}
Kernel matrix-vector multiplication (KMVM) is one of the most important operations needed in scientific computing with core applications in diffeomorphic registration, geometric learning~\cite{charlier2020kernel}, \cite{10.3389/fnins.2020.00052}, numerical analysis \cite{https://doi.org/10.1002/mana.19921560113}, fluid dynamics \cite{BELLEY20091253}, and machine learning \cite{10.5555/559923}. For a dataset of size $n$, KMVM using direct computation has complexity and memory footprint $\mathcal{O}(n^2)$, both unfeasible for modern large scale applications where $n\approx 10^9$ is becoming increasingly common. Pioneering contributions presented in the \textit{Fast Multipole Method}~(FMM) \cite{10.1137/0909044} amend the complexity of these problems to $\mathcal{O}(n \log{(\epsilon^{-1})})$, where $\epsilon$ is the chosen error tolerance, with varying reductions in memory footprint for data restricted to dimension \textcolor{black}{$D=2$}. Subsequent developments in \cite{borm2019variable,greengard2020fast} mainly focused on extending approximations for a broader set of kernels for a fixed dimensionality $D\leq 3$, tailored for
problems in physics with narrow data such as electrostatics, stellar dynamics, Stokes flow, and acoustic problems, amongst others. 

In this paper, we introduce \textit{Faster-Fast and Free Memory Method}~(F$^3$M), a novel algorithm built upon the FFM~\cite{aussal2019fast} framework to perform KMVM on a GPU for \emph{tall and skinny} ($D\leq 7$) data of order $n\sim10^9$ in under a minute with user-specified error tolerance, providing between $2-8500$ times speed-up over existing methods. It should be noted that the constraints on $D$ and $n$ are not inherent formal constraints, but a reflection of practical limits with typical current computational resources.

\shortparagraph{Notations.} We use capital and lower case bold letters to represent matrices and vectors, respectively. In this paper, we will work with matrices $\bfX\in\RR^{n_x\times D}$, $\bfY\in\RR^{n_y\times D}$ and vector $\bfb\in\RR^{n_y}$. For a kernel $k$, the goal for KMVM is to compute $\bfv := \bfK\cdot \bfb$, where $\bfK := k(\bfX,\bfY) = \{k(\bfx_i, \bfy_j)\}_{i=1, j=1}^{n_x, n_y}$, and $\bfx_i, \bfy_j$ denote the $i^{th}, j^{th}$ row of $\bfX,\bfY$ respectively.




\vspace{-0.3cm}
\section{Motivation and Related Work}
\vspace{-0.2cm} 
\label{sec:rel_work}
Kernel methods are often limited by their $\mathcal{O}(n^2)$ memory footprint and computational complexity for KMVM. These constraints make scaling beyond $n=10^6$ challenging. Many recent developments have been made to improve both of these constraints, ranging from hardware acceleration using GPUs in KeOps \cite{charlier2020kernel}, to various approximation techniques proposed in \cite{1238383,wang2019kernelindependent,pmlr-v37-wilson15,aussal2019fast,cai2017smash}. In this work, we focus our attention on kernel independent KMVM methods.



\textbf{KeOps.} \quad Charlier et al.~\citep{charlier2020kernel} proposes a map-reduce scheme to compute kernels using exactly $\mathcal{O}(n)$ memory and $\mathcal{O}(n^2)$ complexity on GPU. This is achieved by computing the full KMVM product on-the-fly by summing $v_i = \sum_{j=1}^{n} k(\mathbf{x}_i,\mathbf{y}_j)b_j$ directly, without ever storing the kernel matrix $\mathbf{K}\in \mathbb{R}^{n\times n}$ explicitly. Extensive experiments show that this method is practical when $n\leq10^6$, as the GPU hardware acceleration allows the KMVM product to be computed in less than a second on a conventional GPU. Moreover, the method places no constraint on the number of features $D$ it can be applied to, making it favourable for KMVM on medium size datasets. In application contexts, KeOps is currently adopted into conjugate gradient solver FALKON~\cite{falkon,rudi2018falkon} as part of the default pipeline. 

\begin{figure}[t!]
    \centering
    \includegraphics[width=\textwidth]{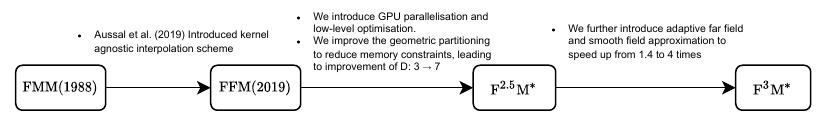}
    \caption{\small{A brief summary of the evolution of the FMM family. Our contributions in $^*$.}}
    \label{fig: history_family}
    \vspace{-0.7cm}
\end{figure}

\begin{wraptable}{r}{0.5\linewidth}
\vspace{-0.4cm}
\caption{\small{Comparison between methods(* indicate ours)}.}
\resizebox{\linewidth}{!}{%
\begin{tabular}{l|ccccc}
Method & FMM& KeOps & FFM & $\text{F}^{2.5}\text{M}^*$& $\text{F}^3\text{M}^*$\\ \hline
Kernel Independent &  & \checkmark & \checkmark & \checkmark & \checkmark \\
Linear Time & \checkmark &  & \checkmark &\checkmark & \checkmark \\
Linear Memory &  & \checkmark & \checkmark & \checkmark &\checkmark \\
Restriction in $D$ &$\leq3$ & & $\leq3$& $\leq7$ & $\leq7$\\
GPU &\checkmark  & \checkmark &  &\checkmark& \checkmark\\
\begin{tabular}[c]{@{}l@{}}Scales to $n=10^9$ \\ 
under $1$ hour \footnotemark[2] \end{tabular} &  &  &   & \checkmark & \checkmark \\
under 1 minute! \footnotemark[2]   &  &  &   &  & \checkmark 
\end{tabular}
\label{table_1}
}
\vspace{-0.3cm}
\end{wraptable} 
\footnote{\textcolor{black}{KMVM applied to 3D data for $n=10^9$ on a Nvidia V100 GPU.}}


\textbf{The Fast and Free Memory Method (FFM).}\quad 
While KeOps can theoretically scale to a billion points, it becomes practically infeasible as the $\mathcal{O}(n^2)$ complexity would imply a computational time of $10^6$ seconds, or roughly $11$ days. To overcome this billion points barrier, Aussal et al.~\cite{aussal2019fast} deploys a geometric space partitioning scheme, and proposed the \textit{Fast and Free Memory Method}~(FFM), a KMVM approach that extends the FMM~\cite{10.1137/0909044} family of algorithms. In contrast to traditional FMM methods, which require specific series expansion of the kernel, FFM deploys Lagrange interpolations to approximate them instead. This allows FFM to be applied to almost any conventional kernel and further enables the user to trade off accuracy with computational efficiencies by controlling the order of the approximating polynomial~\cite{HOWELL1991164}. Compared to KeOps, FFM demonstrates both linear memory and time complexity in experiments and scales to compute a billion-points KMVM on a smaller CPU cluster under $4$ hours, outscaling the GPU implementation of FMM~\cite{cuda_ffm-2021}. While $4$ hours is a significant improvement compared to $11$ days from KeOPS, it still renders many machine learning techniques infeasible. Further, as recursive partitioning of the data space scales poorly with $D$~\cite{Barnes1986AHO}, both FMM and FFM can only be applied to $D\leq3$ data, a price to pay for the speed-up of KMVM operations when $n=10^9$. Furthermore, we show in our experiments that a direct FFM port to GPU gives unstable results for $n=10^9, D=3$ for non-trivial data simulations~(bottom row in Appendix \ref{fig:datasets_illustr}).


\shortparagraph{Our contribution.} To surpass the billion point barrier while maintaining high-speed and stable computation, we propose $\fhalfm$ and our main algorithm $\fthreem$, the first pair of KMVM algorithms that can reliably scale to $n=10^9$ on skinny data using a single GPU. We build $\fhalfm$ on top of FFM by introducing non-trivial GPU parallelisation and low-level optimisations. We further stabilize and improve the original geometric partitioning scheme in FFM to significantly reduce memory constraints, leading to a relaxation of dimensionality constraints from $3$ to $7$. At last, we introduce an adaptive far-field and smooth field approximation scheme for kernel interpolation, resulting in our main algorithm $\fthreem$, which runs $2.0-33.3$ times quicker and more stable than a direct port of FFM on GPU. See Fig.~\ref{fig: history_family} and Table~\ref{table_1} for an overview and comparisons of the methods. We summarise our contribution as follows:

\textbf{1.\quad}We propose \textit{Faster-Fast and Free Memory Method}~$(\fthreem)$, a KMVM algorithm building on top of FFM by applying multiple low-level enhancements, GPU parallelisation, and algorithmic computational and memory enhancements, allowing for KMVM operations on $n\leq 10^9$ data in under a minute. Codebase is released here~\cite{code}.

\textbf{2.\quad}We characterize theoretical time and memory complexity of $\fthreem$.

\textbf{3.\quad}We run extensive KMVM experiments of $\fthreem$ on a variety of tall and skinny data with $n\leq10^9$, demonstrating empirical linear time and memory scaling, and achieving speedups between 2--8500 times when compared to FFM (GPU and CPU) and KeOps. 

\textbf{4.\quad}We run a practical application of $\fthreem$ as a drop-in replacement for KeOps in conjugate gradient solver FALKON \cite{falkon,rudi2018falkon} for kernel ridge regression and classification (KRR) on giga-scale data, obtaining a solution 3.4 times faster with <1\% drop in accuracy. We further demonstrate competitive results on Gaussian process regression against KISS-GP~\cite{10.5555/3045118.3045307}, SVGP~\cite{10.5555/3023638.3023667} and SVGR~\cite{pmlr-v5-titsias09a} with significant speed-ups. 

\vspace{-0.3cm}
\section{Background}
\vspace{-0.2cm}

\label{sec:FFM}

The FFM method considers KMVM for a kernel $k$ evaluated on two data matrices $\mathbf{X},\mathbf{Y}$ and $\mathbf{b}$ are weights associated with $\mathbf{Y}$. 
\begin{wrapfigure}[13]{r}{0.4\linewidth}
    \centering
    \begin{subfigure}[b]{0.4\linewidth}
        \includegraphics[width=\linewidth]{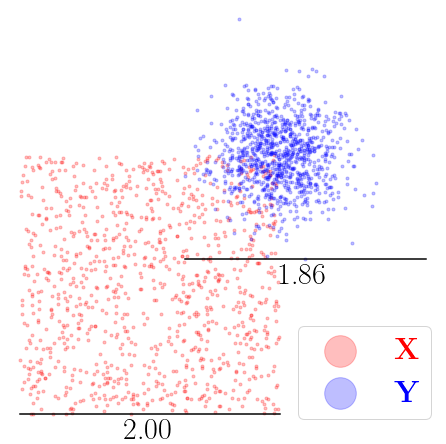}   
    \end{subfigure}%
        \begin{subfigure}[b]{0.5\linewidth}
        \includegraphics[width=\linewidth]{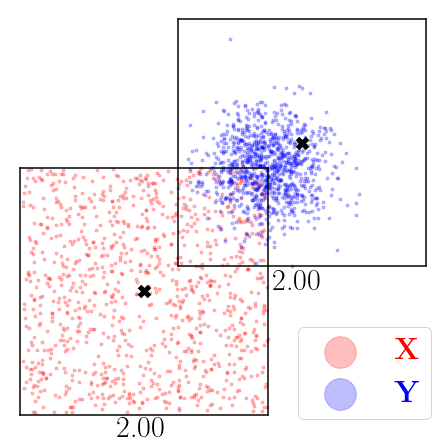}   
    \end{subfigure}
    \caption{\small{Enclosing $\mathbf{X}$ and $\mathbf{Y}$ within a box, "x" marks the center of the box. Numbers under the the boxes denotes edge length. In the right plot, we have enclosed the blue points with the largest box.}}
    \label{fig2_illu}
\end{wrapfigure}

The KMVM is expressed as $\textbf{v} :=  k(\mathbf{X},\mathbf{Y})\cdot {\mathbf{b}}=\mathbf{K}\cdot \mathbf{b}$. For example, $\mathbf{b}$ could be the weights in a KRR or the strength of electronic charges. As $n_x$ and $n_y$ are taken to be very large, a full computation is unfeasible. In this section, we illustrate and detail the main steps of FFM, before presenting our improvements in Section 4. 

For illustration purposes, we first consider a simple 2D KMVM. Our goal is to calculate $k(\mathbf{X},{\mathbf{Y}})\cdot \mathbf{b}$ for $\mathbf{X},\mathbf{Y}$ in \Cref{fig2_illu}. The intuition behind FFM is to reduce the complexity of calculating the full KMVM by partitioning $\mathbf{X}$ and
${\mathbf{Y}}$ such that certain calculations can be approximated in a fast manner, based on the pairwise distances between partitions. 

\textbf{Enclosing and partitioning the data.} \quad The first step is to partition the data. To begin, we find a large enough box that can just enclose $\mathbf{X}$ or ${\mathbf{Y}}$. The edge length of this box is calculated as
$$\mathcal{E}:= \max\left(\max_d(x_{\max}^{(d)}-x_{\min}^{(d)}),\max_d(y_{\max}^{(d)}-y_{\min}^{(d)})\right)$$ where $x_{\max}^{(d)},x_{\min}^{(d)}$ denotes the largest value and the smallest value along the $d$-dimension in $\mathbf{X}$ and similarly for ${\mathbf{Y}}$. \Cref{fig2_illu} illustrates this enclosing procedure.


\textbf{Defining near and far-field}\quad In FMM, an octree \cite{octree_cite} is applied to recursively partition data into

smaller boxes $B_p^X \subset \mathbf{X}, B_q^Y \subset \mathbf{Y}$, with $p,q$ denoting box indices. Here each box corresponds to a subset of rows in the data matrix. Let us also denote $\mathbf{b}_q$ as the partition of $b_j$'s grouped with the same indices as $B_q^Y$. To calculate the KMVM between two boxes $B_p^X,B_q^Y$ with the grouped vector $\mathbf{b}_q$, for each $\mathbf{x}_i \in B_q^X$, we compute
\begin{align}
    v_i^{p,q} = \sum_{\mathbf{y}_j\in B_q^Y, b_j \in \mathbf{b}_q} k(\mathbf{x}_i,\mathbf{y}_j)b_j
\end{align}
with $\mathbf{v}^{p, q}=[v_1^{p,q}\hdots v_{n_x}^{p,q}]$. Now the target $\mathbf{v}$ can be computed as $\mathbf{v}=\sigma([\mathbf{v}^{p=1},\hdots,\mathbf{v}^{p=P}]^\top)$, where $\mathbf{v}^{p}=\sum_{q=1}^Q\mathbf{v}^{p,q}$, $P,Q$ denote the total number of boxes and $\sigma(\cdot)$ a permutation such that $v_i^p$ appear in the same order as $\mathbf{x}_i$ appears in $\mathbf{X}$. \Cref{fig3_illu} shows how boxes are recursively partitioned.

\begin{wrapfigure}[12]{r}{0.5\linewidth}
        \centering
    \includegraphics[width=\linewidth]{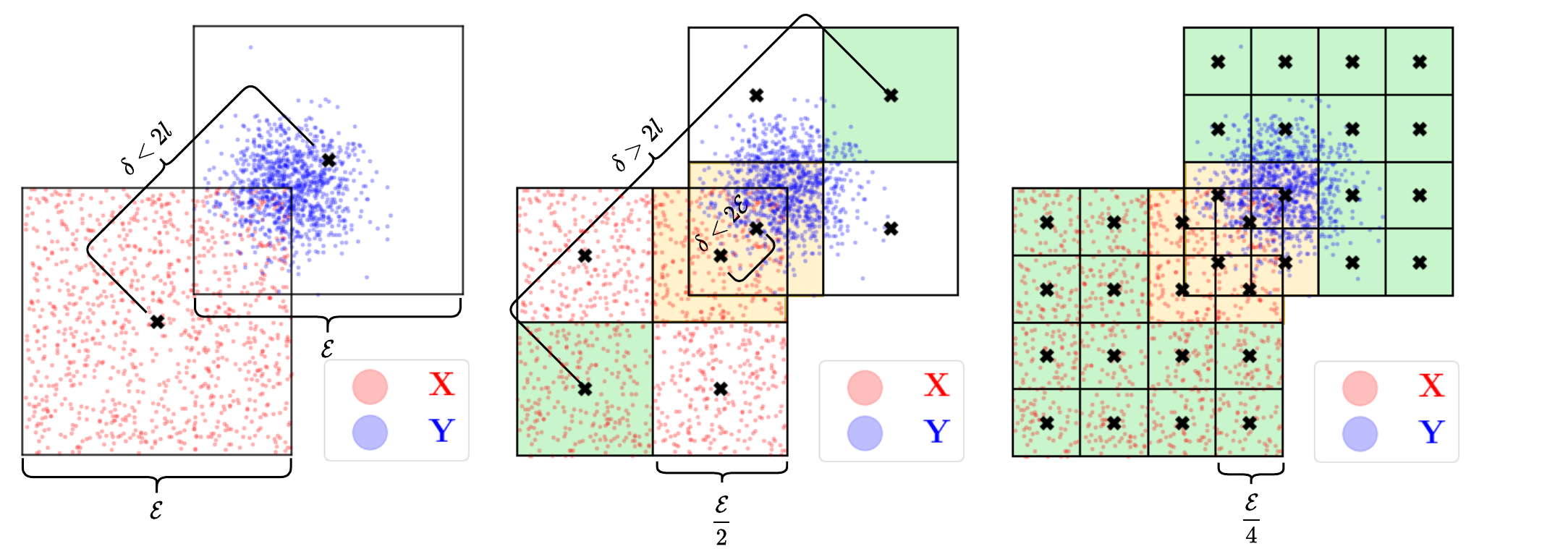} 
    \caption{\small{Recursive partitioning of $\mathbf{X}$ and $\mathbf{Y}$ for 2D data. Far-field interactions are colored green while near-field interactions are colored orange and $\delta$ denotes the euclidean distance between the centers.}}
    \label{fig3_illu}
\end{wrapfigure}

\textbf{Far and near-field interactions}\quad FFM relies on a divide-and-conquer
strategy to effectively compute a KMVM product; data is partitioned into boxes and then separated into far-field and near-field interactions, where near-field interactions are computed exactly and far-field interactions are approximated using Lagrange interpolation for speed, explained in the paragraph below. The partitioning procedure in FFM is recursive, where the recursion depth $\texttt{tree\_depth}$ controls the size of the edge $l=\frac{\mathcal{E}}{2^{\texttt{tree\_depth}}}$ of the box. An interaction is defined to be in the far-field if the distance between the two center points of the boxes exceeds $2l$, i.e. $\texttt{IsFarField}:=\Vert \mathbf{x}_{\text{center}}-\mathbf{y}_{\text{center}} \Vert \geq 2l$. While $l$ for each box will decrease with the number of divisions, this rule ensures a fixed minimal distance for a given depth for far-field interactions. \Cref{fig3_illu} illustrates how far(green) and near(orange)-field interactions arise between $\mathbf{X}$ and ${\mathbf{Y}}$ when $\texttt{tree\_depth}$ increases.

\textbf{Lagrange interpolation} \quad We review Lagrange interpolation used for far-field approximations in FFM. Given a function $f(x):[-1,1] \to \mathbb{R}$ and $r+1$ unique points $s_i \in [-1,1],\quad i=0,\hdots,r$, there exists a unique polynomial $p_r(x)$ of degree $\leq r$ that interpolates $f$ at $p_r(s_i)=f(s_i)$. The Lagrange polynomial is given by $p_{r}(t)=\sum_{i=0}^{r} f(s_i) \mathcal{L}_{i}(t),$
where $\mathcal{L}_{i}(t)=\frac{\prod_{j=0, j \neq i}^{r}\left(t-s_{j}\right)}{\prod_{j=0, j \neq i}^{r}\left(s_{i}-s_{j}\right)}, \quad i=0, \ldots, r.$
We are free to chose the degree $r$ as well as the points $s_i$ to interpolate through. The choice of $s_i$ is especially important in minimizing large oscillations around the edges of the interpolation interval (Runge's phenomenon~\cite{10.2307/2323093}). For this reason, Chebyshev nodes of the second kind are used \cite{doi:10.1137/S0036144502417715}
$s_{i}=\cos \theta_{i},\text{ where } \theta_{i}=\frac{i \pi}{r}, \quad i=0, \ldots, r$.

\textbf{Interpolating $k(\mathbf{x},\mathbf{y})$} \quad By noticing that $k(\mathbf{x},\mathbf{y})$ is a bivariate function, we can apply Lagrange interpolation twice, thus interpolating $k(\mathbf{x},\mathbf{y})$ as 
$k(\mathbf{x}, \mathbf{y}) \approx \sum_{i=1}^{r_{X}} \mathcal{L}_{i}(\mathbf{x}) \sum_{j=1}^{r_{Y}} k\left(\mathbf{s}^x_{i}, \mathbf{s}^y_{j}\right) \mathcal{L}_{j}(\mathbf{y}).$ Here $r_X,r_Y$ denotes the number of the interpolation nodes and $\mathbf{s}^x_{i},\mathbf{s}^y_{j}\in \mathbb{R}^D$ denotes the grid of interpolation nodes for $B_p^X$ and $B_q^Y$. Note that since $\mathbf{x}\in \mathbb{R}^D$, we take
$\mathcal{L}_{i}(\mathbf{x}):= \prod_{d=1}^D \frac{\prod_{j=0, j \neq i}^{r}\left(x^{(d)}-s^{(d)}_{j}\right)}{\prod_{j=0, j \neq i}^{r}\left(s^{(d)}_{i}-s^{(d)}_{j}\right)}, \quad i=0, \ldots, r.$ These operations can be vectorized and computed sequentially on-the-fly with linear memory footprint
$\mathbf{v} \approx \mathbf{L}_{X}^{T} \cdot \overbrace{(\mathbf{K} \cdot\underbrace{\left(\mathbf{L}_{Y} \cdot \mathbf{b}\right)}_{\mathbf{v}_1})}^{\mathbf{v}_2}$,
which is done by first computing $\textbf{v}_1$, then $\textbf{v}_2$ and lastly $\textbf{v}$. Here $\mathbf{L}_{X}$ denotes a matrix with entries $\mathcal{L}_i(\mathbf{x}_j)$, where $i$ indexes the rows and $j$ the columns, with $\mathbf{L}_{Y}$ following the same definition for $\mathbf{y}_j$'s instead. A far-field KMVM between two boxes $\textbf{v}_{p,q}=k(B_p^X,B_q^Y)\cdot \textbf{b}_q$ is then approximated by using double Lagrange interpolation according to \Cref{big_explain}.
\vspace{-0.3cm}
\begin{figure}[htb!]
    \includegraphics[width=1\linewidth]{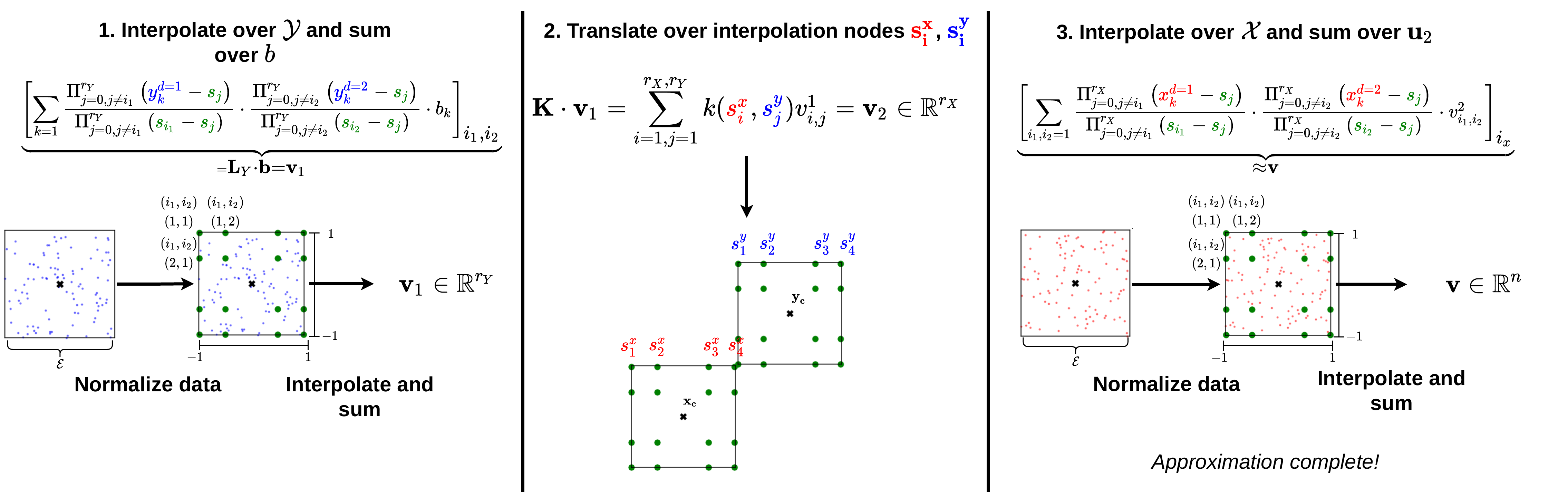}
    \caption{\small{Here we approximate a far-field interaction between two boxes. In $\textbf{1.}$ we first normalize the data between $[-1,1]$ and conduct 2D interpolation of $k(\cdot, \mathbf{y})$ while summing over $\mathbf{b}$. In \textbf{2.}, the interpolation for $k(\mathbf{x},\cdot)$ will also be normalized and hence we need to translate the distance between the boxes by calculating $\mathbf{K}\cdot \mathbf{v}_1$. Lastly in \textbf{3.} we interpolate $k(\mathbf{x},\cdot)$ while we sum $\mathbf{v}_2$.}
    }
    \vspace{-0.4cm}
    \label{big_explain}
\end{figure}
\vspace{-0.3cm}
\section{Faster-FFM ($\fhalfm$ and $\fthreem$)}
\vspace{-0.2cm}
\label{sec:3fm}
To fully leverage the port of FFM to GPU, we enhance FFM with novel approximation procedures for improved complexity and memory optimizations to scale to $n=10^9$. We coin this improved version Faster-FFM ($\fthreem$). The capabilities of $\fthreem$ against previous methods are summarized in \Cref{table_1}.
\vspace{-0.3cm}
\subsection{CPU to GPU optimizations}
\vspace{-0.2cm}

\label{gpucpu_implementation}
In FFM, every computation is serial and on CPU. When moving to GPU, we have parallelized all major computations. These parallelizations are non-trivial and require low-level algorithmic optimizations, with challenges such as:\newline\textbf{Box-to-threadblock alignment} -- A major challenge in the implementation of both the parallel far-field and near-field computations was correctly aligning thread blocks to boxes. This aligning requirement imposed non-trivial boundary conditions on data indexing when using shared memory. To minimize memory usage of box and block indicators for our implementation, we represented the box belonging of each point as index intervals (i.e. box 1 consists of points with $i\in [1,\hdots,500]$ and box 2 with  $i\in [501,\hdots,1337]$, etc.) and modulo arithmetic to infer the block belonging. This clearly requires that the points are sorted or grouped according to their box belonging. However, as we detail in the next paragraph, arranging the points could not be done straightforwardly with native sorting methods. We further illustrate how parallelization is done for calculating near-field interactions in Appendix \ref{implementation_details}. \newline \textbf{No native sorting methods} -- We found that LibTorch \citep{NEURIPS2019_9015} sorting methods often led to out-of-memory (OOM) due to allocation of large long-type vectors on GPU. When $n=10^9$, this implies

\begin{wrapfigure}[14]{r}{0.5\textwidth}
\centering
    \includegraphics[width=1.0\linewidth]{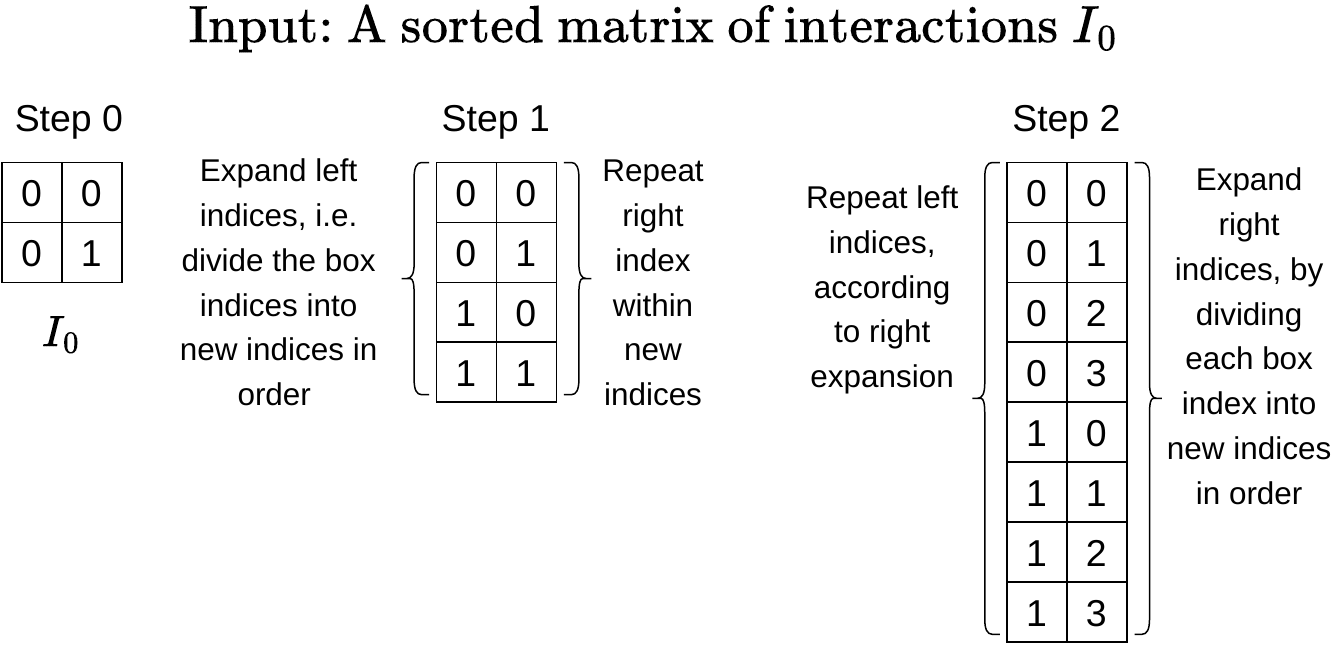}
    \caption{Here we assume $D=1$, hence we only divide each box $2^D=2$ times each time.}
    \label{smart_division}
\end{wrapfigure}
allocating 8GB of memory, 25\% of the 32GB card used, making it a necessity to avoid native sorting methods. \newline \textbf{In-place grouping data on boxes} -- Due to infeasible LibTorch sorting methods, we additionally had to design an algorithm that finds a permutation that would group $\mathbf{X}$ into its corresponding boxes in linear time and memory. We used a count and increment-based strategy that would: \newline 
    \textbf{1.} Count the number of points in each box during the assignment operation ($\mathcal{O}(n)$) and store the count in a vector $\xi$. Then run a cumulative sum over $\xi$, starting from 0. \newline 
    \textbf{2.} Initialize a $n$ long permutation vector $\pi$. Using the counting vector $\xi$, we would re-run the assignment operation and arrange a point with index $i$ as following $\pi[\texttt{atomicAdd}(\xi'[\texttt{box\_index}],1)+\xi[\texttt{box\_index}]]=i$, where $\xi'$ is a running count of points in each box. We specifically have to use the function \texttt{atomicAdd} to increment the count for each box in parallelized GPU environments to avoid thread locks.  
    
We refer to the $\texttt{box\_division\_cum\_hash}$ and $\texttt{box\_division\_assign\_hash}$ function in $\texttt{n\_tree.cu}$ for exact details.\newline  \textbf{Ensuring interactions are sorted} -- To avoid any unnecessary sorting, we ensure that the matrix containing interactions is always sorted by recursively dividing old interactions. We illustrate the procedure in \Cref{smart_division}. We refer to the $\texttt{get\_new\_interactions}$ function in $\texttt{n\_tree.cu}$ for the exact implementation. 

However, we found that these optimizations and porting alone were not enough to scale to $n=10^9$ on 3D datasets, as \Cref{ablationf3m} demonstrates. FFM doesn't remove empty boxes or handle boxes with few points in them and keeps exponentially creating new empty boxes and interactions, thus leading to out-of-memory (OOM) errors on non-uniform data (see Appendix \ref{fig:datasets_illustr}).     
\vspace{-0.3cm}
\subsection{Scaling to $n=10^9$ on GPU ($\fhalfm$)}
\vspace{-0.2cm}
In this section, we detail the memory enhancements that allow $\fthreem$ to consistently scale to $n=10^9$. 

\begin{wrapfigure}[14]{r}{0.5\textwidth}
\centering
    \includegraphics[width=1.0\linewidth]{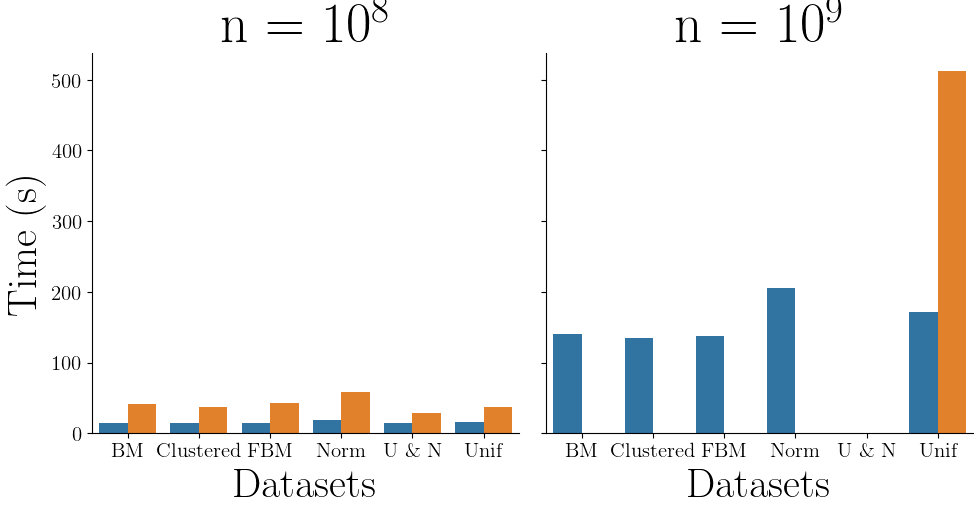}
    \caption{\small{KMVM times on several datasets between {\color{colorB} \samplesquare{}} FFM(GPU) vs {\color{colorA} \samplesquare{}} $\fthreem$. U\&N was only run up to $n=5\cdot 10^8$}}
    \label{ablationf3m}
\end{wrapfigure}
\textbf{Removing empty boxes with hash list indexing}\quad To ensure linear memory on GPU, we only keep a reindexing vector $\sigma$ of size $n_x$(resp. $n_y$) in memory during the computation of the algorithm in addition to a list of interactions and box centers. This reindexing vector rearranges the data points so they appear in the order of the box they belong to. We optimize both the computation and the memory footprint of these objects by avoiding recursive formulas and hash lists.

Naively, points can be assigned to boxes by direct comparison to all existing box centers. As the number of centers grows exponentially with depth $\texttt{tree\_depth}$, this method quickly becomes pathological. To amend this, we propose a linear complexity formula to retrieve the box index $\beta_i$ a point $\mathbf{x}\in\mathbf{X}\subset \mathbb{R}^D$ belongs to 
$\beta_i = \underbrace{\sum_{d=1}^D 2^{\texttt{tree\_depth}\cdot (d-1)}}_{\text{Summing over $D$ dimensions}}  \cdot \underbrace{\lfloor 2^{\texttt{tree\_depth}} \frac{x_d -  \alpha_d}{\mathcal{E}}   \rfloor}_{ \substack{\in \{0,1\}, \text{ Denotes left or right}\\\text{of center of box edge}} 
    },$
where $\alpha_d$ denotes the minimum value of $\mathbf{X}$ in dimension $D$ and $x_d$ is the value of $\mathbf{x}$ in dimension $D$. To prevent the number of boxes from growing exponentially, we remove empty boxes with each division. To assign points to the corresponding boxes, we use a hash list to store $\beta_i$ and the order $i$. We can then group points $\{\mathbf{x}_i\}_{i=1}$ to their respective ordering $i$ using the hash list in $\mathcal{O}(n)$ time in contrast to $\mathcal{O}(n\cdot2^{D\cdot \texttt{tree\_depth}})$ by direct computation. 

\textbf{Handling boxes with few points with \emph{small field}}\quad In cases when the number of points in each box can vary greatly, we separately consider the interactions where the number of points in boxes is small. Hence, we say that there is a \emph{small field} interaction between boxes $B_p^X,B_q^X$ if both have a small number of points, i.e. if

\begin{wrapfigure}[6]{r}{0.2\textwidth}
\centering
    \includegraphics[width=0.8\linewidth]{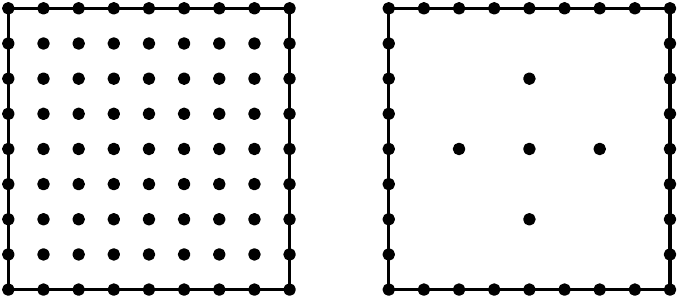}
    \caption{\small{Full grid vs sparse grid.}}
    \label{sg_example}
\end{wrapfigure}
$|B_p^X|+|B_q^X|\leq \rho$,
for some threshold number $\rho$. To minimize the computations needed, $\rho$ can be set to $\rho=r_X+r_Y$. This intuitively allows $\fthreem$ to directly compute interactions that are too small to benefit from interpolation savings (i.e. $|B_p^X|+|B_q^X|\leq r_X+r_Y$), thus limiting memory usage by stopping partitions from dividing further than necessary. In higher dimensions where the division rate is faster, $\rho$ can be set to a higher value to limit memory usage at the expense of more direct computations which are slower.

\textbf{Sparse grids}\quad As the number of Lagrange polynomials increases exponentially with dimension, we implement sparse grids \cite{smolyak-1963} to allow for a finer selection of interpolation nodes. With sparse grids, the number of nodes needed grows slower \cite{kang2015mitigating}, thus saving memory. We give an example of a sparse grid versus a full grid in 2D in \Cref{sg_example}. 
\vspace{-0.3cm}
\subsection{Speeding up $\fhalfm$ $\left(\fhalfm \to \fthreem\right)$}
\label{smooth_paragraph}
\vspace{-0.2cm}
\textbf{Smoothness criteria}\quad FFM speeds up its computations with minimal loss in accuracy by selectively interpolating interactions that are far apart.  To improve speed, we introduce the \emph{smoothness criterion} to widen the selection of interactions that can be interpolated with minimal loss in accuracy. For a Gaussian Kernel $k(\mathbf{x},\mathbf{y})=\exp{\left(\frac{\Vert \mathbf{x}-\mathbf{y}\Vert^2}{2\gamma^2}\right)}$, with lengthscale $\gamma$, the smoothness criteria is defined as $\texttt{is\_smooth}:=\frac{\frac{1}{N_{B_p^X}}\sum_{\mathbf{x}_i\in B_p^X}\sum_{d=1}^D\Vert x_i^{(d)} - \Bar{x}^{(d)} \Vert^2}{2\gamma^2} +\frac{\frac{1}{N_{B_q^Y}}\sum_{\mathbf{y}_j\in B_q^Y}\sum_{d=1}^D\Vert y_j^{(d)} - \Bar{y}^{(d)} \Vert^2}{2\gamma^2}\leq \eta$

between an adjacent interaction of boxes $B_p^X$, $B_q^Y$. The quantity computed can be understood as ``Effective Variance'' (EV), as it considers total variation in the exponent of the Gaussian kernel. We justify the \emph{smoothness criteria} with the following proposition.
\begin{proposition}
Consider $\mathbf{x},\mathbf{y} \in \mathcal{X} \subset \mathbb{R}^d$ such that $d(\mathbf{x},\mathbf{y}):=\frac{\Vert \mathbf{x}-\mathbf{y}\Vert^2}{2\gamma^2}=\frac{1}{2\gamma^2}\sum_{i}^d(x^{(i)}- y^{(i)})^2 \leq \eta<1$ for all $\mathbf{x},\mathbf{y}$. When interpolating $k(\mathbf{x},\mathbf{y}) = \exp\left(-d(\mathbf{x},\mathbf{y})\right)$ using bivariate Lagrange interpolation $\mathcal{L}_r(\mathbf{x},\mathbf{y}):=\mathbf{L}_{X}^{T} \cdot \mathbf{K} \cdot \left(\mathbf{L}_{Y} \cdot \mathbf{b}\right)$ with degree $r=2p$, for any $p\in \mathbb{N}_{>0}$ there exist nodes $\mathbf{s}^{\mathbf{x}},\mathbf{s}^{\mathbf{y}}$ for $\mathcal{L}_r(\mathbf{x},\mathbf{y})$ such that the pointwise interpolation error is bounded by $\mathcal{O}(\eta^{p+1})$.
\end{proposition}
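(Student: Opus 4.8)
The plan is to exploit the smallness of $d(\mathbf{x},\mathbf{y}) \le \eta < 1$ by Taylor-expanding the exponential and matching the resulting polynomial against the polynomial space that the interpolation reproduces exactly. Writing $g(t) = e^{-t}$ and expanding about $t = 0$ evaluated at $t = d(\mathbf{x},\mathbf{y})$, I would split
$$k(\mathbf{x},\mathbf{y}) = \exp(-d(\mathbf{x},\mathbf{y})) = \underbrace{\sum_{j=0}^{p}\frac{(-d(\mathbf{x},\mathbf{y}))^j}{j!}}_{=: P(\mathbf{x},\mathbf{y})} + R(\mathbf{x},\mathbf{y}),$$
where, by the Lagrange form of the Taylor remainder together with $0 \le d \le \eta$, the remainder obeys $|R(\mathbf{x},\mathbf{y})| \le \frac{d^{p+1}}{(p+1)!} \le \frac{\eta^{p+1}}{(p+1)!}$ at every point with $d \le \eta$.

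The key structural observation is that $P$ is a polynomial of coordinatewise degree at most $2p$ in $\mathbf{x}$ and in $\mathbf{y}$ separately. Indeed, $d = \frac{1}{2\gamma^2}\sum_i (x^{(i)} - y^{(i)})^2$ is quadratic in each coordinate, so $d^j$ has coordinatewise degree at most $2j \le 2p$ and lies in the tensor-product space $\Pi_{2p}^{\mathbf{x}} \otimes \Pi_{2p}^{\mathbf{y}}$. Since the double Lagrange interpolation of degree $r = 2p$ uses $2p+1$ nodes per axis and therefore reproduces exactly every element of that tensor-product space, the interpolant of $P$ equals $P$. By linearity of the interpolation operator this gives the central identity $k - \mathcal{L}_r[k] = R - \mathcal{L}_r[R]$, reducing the whole problem to interpolating the already-small remainder.

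It remains to bound $\mathcal{L}_r[R]$. I would take the nodes $\mathbf{s}^{\mathbf{x}}, \mathbf{s}^{\mathbf{y}}$ to be the (tensor-product, second-kind) Chebyshev nodes scaled into the two boxes; this is the ``there exist nodes'' content and is precisely what keeps the constants tame. Writing $\mathcal{L}_r[R](\mathbf{x},\mathbf{y}) = \sum_{i,j} \mathcal{L}_i(\mathbf{x})\mathcal{L}_j(\mathbf{y}) R(\mathbf{s}^{\mathbf{x}}_i, \mathbf{s}^{\mathbf{y}}_j)$ and factoring the double sum, I would bound $|\mathcal{L}_r[R]| \le \Lambda_{\mathbf{x}} \Lambda_{\mathbf{y}} \max_{i,j} |R(\mathbf{s}^{\mathbf{x}}_i, \mathbf{s}^{\mathbf{y}}_j)|$, where $\Lambda_{\mathbf{x}}, \Lambda_{\mathbf{y}}$ are the multivariate Lebesgue constants of the node sets. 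Because the nodes lie inside $\mathcal{X}$, the remainder at every node is again at most $\eta^{p+1}/(p+1)!$, and for Chebyshev nodes $\Lambda = \mathcal{O}((\log r)^D)$ is bounded for fixed $r$ and $D$. Combining the two pieces yields $|k - \mathcal{L}_r[k]| \le (1 + \Lambda_{\mathbf{x}}\Lambda_{\mathbf{y}}) \frac{\eta^{p+1}}{(p+1)!} = \mathcal{O}(\eta^{p+1})$.

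The step I expect to be the main obstacle is the degree bookkeeping in the second paragraph: making precise that the $p$-th order Taylor polynomial of $\exp(-d)$ sits inside the coordinatewise-degree-$2p$ tensor-product space that the scheme reproduces, which is exactly what forces the choice $r = 2p$ and produces the $\eta^{p+1}$ rate. The accompanying stability estimate through the Lebesgue constant is standard once the Chebyshev nodes are fixed; the only care needed there is to confirm that the Lebesgue constant of the tensor-product (or sparse-grid) construction remains bounded, which I would either cite or verify directly for the second-kind Chebyshev grid.
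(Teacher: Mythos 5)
Your proof is correct and follows the same overall strategy as the paper's: Taylor-expand $e^{-d}$ to order $p$, use $d\le\eta<1$ to bound the remainder by $\mathcal{O}(\eta^{p+1})$, and exploit the degree matching $r=2p$. Where the two differ is the handling of the term coupling the interpolation operator with the Taylor polynomial. The paper writes $|\mathcal{L}_r - k|\le |T_p k - k| + |T_p k - \mathcal{L}_r|$ and then asserts that nodes can be chosen so that $|T_p k - \mathcal{L}_r| = 0$; taken literally this is not achievable, since $\mathcal{L}_r$ interpolates $k$ (not $T_p k$) at the nodes, so it can coincide identically with $T_p k$ only if the Taylor remainder vanishes at every node. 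Your argument repairs exactly this step: you observe that $T_p k$ lies in the tensor-product space of coordinatewise degree at most $2p$, which the $(2p+1)$-nodes-per-axis scheme reproduces exactly, so $\mathcal{L}_r[T_p k] = T_p k$ and hence $k - \mathcal{L}_r[k] = R - \mathcal{L}_r[R]$; the leftover term $\mathcal{L}_r[R]$ is then controlled by the Lebesgue constant of the Chebyshev grid times $\max|R|\le \eta^{p+1}/(p+1)!$, where the node values of $R$ obey the same bound because the nodes lie in $\mathcal{X}$, and the Lebesgue constant is a fixed constant for fixed $r$ and $D$. So your write-up is not merely equivalent to the paper's proof; it is a rigorous completion of its sketch, at the modest cost of invoking the stability (Lebesgue-constant) estimate that the paper's one-line conclusion sidesteps.
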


\begin{wrapfigure}[15]{r}{0.60\linewidth}
\begin{subfigure}[H]{0.5\linewidth}
\includegraphics[width=\linewidth]{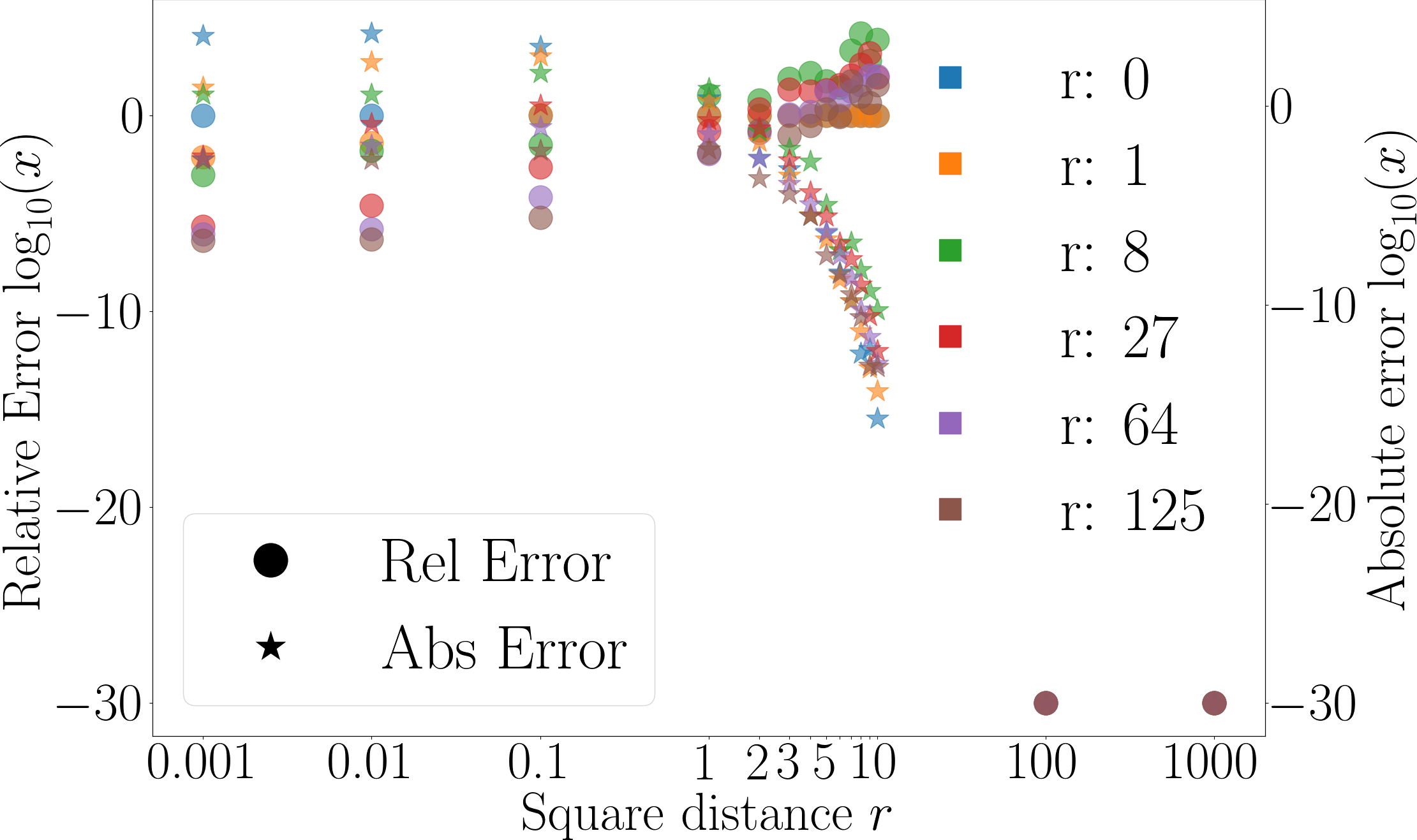}
\caption{Dimension $d=3$}
\end{subfigure}%
\begin{subfigure}[H]{0.5\linewidth}
\includegraphics[width=\linewidth]{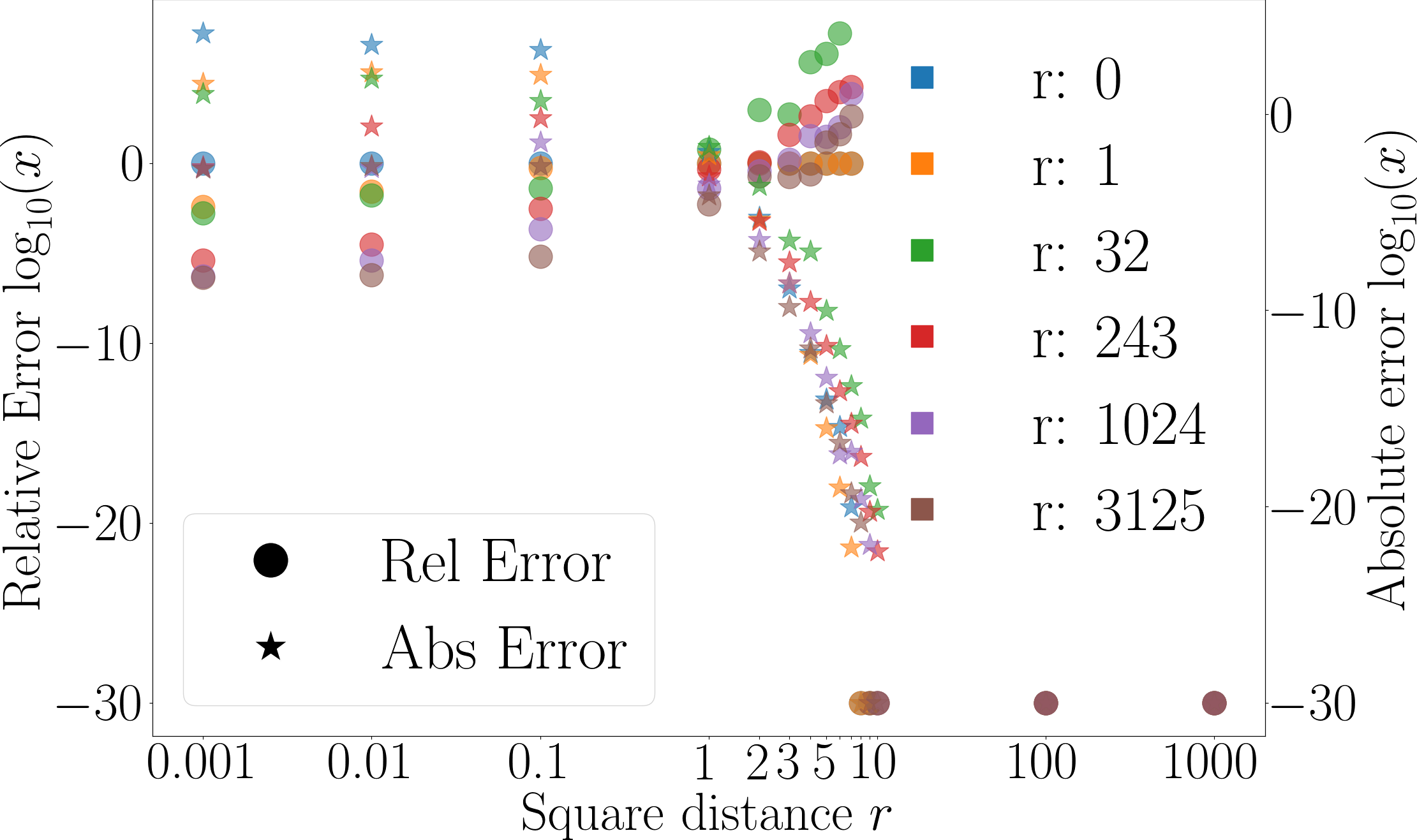}
\caption{Dimension $d=5$}
\end{subfigure}
\caption{\small{Plotting relative and absolute error against squared distance between boxes. "0" nodes mean we use the zero vector as an approximation to the KMVM for a gaussian kernel. We observe that its not beneficial to interpolate at all when the square distance exceeds 5 and that is is sufficient to only use $3^D$ nodes when square distance is $\leq 0.01$. }}
\label{empirical_rule}
\end{wrapfigure}
See Appendix \ref{sm_field_theory} for proof. Hence for small $\eta < 1$, we see that the error becomes small for well specified $\mathcal{L}_r(\mathbf{x},\mathbf{y})$.  To avoid calculating the sample variance during computations which costs $\mathcal{O}(n)$, we exploit that data is partitioned into hypercubes with a \emph{known} edge $\mathcal{E}$ and take the upper bound of the variance in each cube as $\frac{\mathcal{E}^2}{4}$ along a dimension. A proof for this bound is provided in \Cref{short_claim}. Adjacent interactions are then classified as smooth when $\sum_{d}^D \frac{\mathcal{E}^2}{4\cdot 2 \gamma} (\text{EV of }B_p^X) + \frac{\mathcal{E}^2}{4\cdot 2 \gamma} (\text{EV of }B_q^Y) =  \frac{D\mathcal{E}^2}{\gamma^2\cdot 4}\leq \eta$ which only costs $\mathcal{O}(1)$ to compute. 

\textbf{Adaptive far-field approximation} \quad To further improve speed we introduce an adaptive rule to select the number of interpolation nodes used when calculating far-field interactions. Error bounds for multidimensional Lagrange interpolation have been proposed in \cite{10.2307/2156076}, however, these bounds cannot be directly used to create an adaptive interpolation rule. We thus simulate KMVM errors for $k(\mathbf{X},\mathbf{Y})\cdot \mathbf{b}$ where $\mathbf{X},\mathbf{Y}$ are uniformly distributed and $\mathbf{b}$ is normally distributed. We fix a distance between $\mathbf{X}$ and $\mathbf{Y}$ and vary the squared of this distance between boxes against nodes in \Cref{empirical_rule}. We use a Gaussian Kernel with $\gamma=\frac{1}{\sqrt{2}}$.

Based on \Cref{empirical_rule}, we use the following rule for selecting the number of interpolation nodes for far-field interactions
\begin{equation*}
    \begin{split}
       r_{\text{far}}(r)= \left\{\begin{array}{ll}
\min(r,3^D) &\text{ if }  \left (\frac{\mathcal{E}}{2^{c_{\text{depth}}}}\right)^2\cdot \frac{1}{2\gamma^2}\leq 0.01  \\
r &\text{ if } 0.01<\left (\frac{\mathcal{E}}{2^{c_{\text{depth}}}}\right)^2\cdot \frac{1}{2\gamma^2}\leq 5 \\
0 &\text{ if } 5<\left (\frac{\mathcal{E}}{2^{c_{\text{depth}}}}\right)^2\cdot \frac{1}{2\gamma^2} \\
\end{array}\right.
    \end{split}
\end{equation*}
where $r$ is the number of nodes chosen to interpolate with in the general case.

\textbf{Barycentric lagrange interpolation}\quad We slightly improve the complexity further by implementing barycentric Lagrange interpolation \cite{doi:10.1137/S0036144502417715} evaluated at the Chebyshev nodes of the second kind. As this is a well-known technique, we refer to the appendix for more details. It should be noted that the above methods can straightforwardly be extended to any \emph{translation-invariant} kernel by recalculating the Taylor expansion for \emph{smoothness criteria} and rerunning the simulation for \emph{adaptive far-field approximation}.
\vspace{-0.3cm}
\subsection{Complexity}
\vspace{-0.2cm}

The time complexity of FFM is $\mathcal{O}(n\log{(n)})$ \cite{aussal2019fast} and we use a similar derivation strategy for $\fthreem$ to obtain a complexity that is dependent on the effective variance limit $\eta$ (chosen parameter) and the box width $\mathcal{E}$ (data). We first present two propositions needed to derive the complexity of $\fthreem$. 
\begin{proposition}
A far-field interaction between two boxes containing $n_x$ and $n_y$ points respectively has time complexity $\mathcal{O}(n)$, where $n=\max(n_x,n_y)$.
\end{proposition}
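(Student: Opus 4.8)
The plan is to exploit the factorized form of the far-field approximation stated in Section~\ref{sec:FFM}, namely $\mathbf{v} \approx \mathbf{L}_{X}^{T}\cdot(\mathbf{K}\cdot(\mathbf{L}_{Y}\cdot\mathbf{b}))$, and to bound the cost of evaluating it from right to left (first $\mathbf{v}_1$, then $\mathbf{v}_2$, then $\mathbf{v}$). First I would fix the matrix dimensions. Writing $r_X$ and $r_Y$ for the number of interpolation nodes attached to the two boxes, the interpolation matrices are $\mathbf{L}_{Y}\in\mathbb{R}^{r_Y\times n_y}$ and $\mathbf{L}_{X}^{T}\in\mathbb{R}^{n_x\times r_X}$, the node-to-node kernel matrix is $\mathbf{K}\in\mathbb{R}^{r_X\times r_Y}$, and $\mathbf{b}\in\mathbb{R}^{n_y}$. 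The crucial observation I would state explicitly is that $r_X$ and $r_Y$ are determined only by the interpolation degree $r$, the dimension $D$, and (for sparse grids) the Smolyak level, and not by the box populations $n_x,n_y$; hence they are $\mathcal{O}(1)$ with respect to $n$.

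Next I would count the three matrix-vector products in the order they are performed. Computing $\mathbf{v}_1=\mathbf{L}_{Y}\mathbf{b}$ costs $\mathcal{O}(r_Y n_y)$; the intermediate product $\mathbf{v}_2=\mathbf{K}\mathbf{v}_1$ costs $\mathcal{O}(r_X r_Y)$; and the final expansion $\mathbf{v}=\mathbf{L}_{X}^{T}\mathbf{v}_2$ costs $\mathcal{O}(r_X n_x)$. Summing these gives a total evaluation cost of $\mathcal{O}(r_Y n_y + r_X r_Y + r_X n_x)$.

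I would then account for the one-time cost of assembling the three matrices. Each entry of $\mathbf{L}_{X}$ or $\mathbf{L}_{Y}$ is a product of $D$ univariate basis values; with the barycentric scheme of Section~\ref{smooth_paragraph} evaluated at the fixed Chebyshev nodes, these per-entry evaluations cost $\mathcal{O}(rD)$ (or less, after amortization), so building $\mathbf{L}_{X}$ and $\mathbf{L}_{Y}$ costs $\mathcal{O}(n_x r_X rD)$ and $\mathcal{O}(n_y r_Y rD)$, while assembling $\mathbf{K}$ costs $\mathcal{O}(r_X r_Y)$. Treating $r_X,r_Y,r,D$ as constants independent of $n$, every term collapses except those linear in the box populations, and the total becomes $\mathcal{O}(n_x + n_y)=\mathcal{O}(\max(n_x,n_y))=\mathcal{O}(n)$, which is the claim.

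The proof involves no genuine obstacle; the only point demanding care is the justification that $r_X$ and $r_Y$ stay bounded independently of $n$. This holds because the node count is fixed once the target accuracy, the kernel, and the dimension are fixed --- exactly the regime in which the adaptive far-field rule operates --- so the number of nodes never grows with how many points happen to fall inside a box. With sparse grids the node count grows only mildly in $D$, which shrinks the constant but does not affect the linear-in-$n$ conclusion.
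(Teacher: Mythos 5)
Your proposal is correct and follows essentially the same route as the paper's own proof: both bound the three matrix--vector products in $\mathbf{L}_{X}^{T}\cdot(\mathbf{K}\cdot(\mathbf{L}_{Y}\cdot\mathbf{b}))$ by $\mathcal{O}(n_x r_X)$, $\mathcal{O}(r_X r_Y)$, and $\mathcal{O}(n_y r_Y)$ respectively, then use the key fact that $r_X,r_Y$ are independent of $n_x,n_y$ to conclude $\mathcal{O}(n)$. Your additional accounting for the assembly cost of the interpolation matrices is a harmless refinement that the paper leaves implicit.
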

\begin{proposition}
Given $n$ data points in dimension $D$, the maximum number of divisions $\text{Tree}_{\texttt{max divisions}}$ is given by 
\begin{equation}
    \text{Tree}_{\texttt{max divisions}} = \log_{2^D}(n).
\end{equation}
\end{proposition}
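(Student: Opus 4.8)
The plan is to count boxes as a function of recursion depth and then identify the depth at which further subdivision stops producing new non-empty boxes. First I would fix the branching factor of the partition. Each division bisects a hypercube along all $D$ coordinate axes, so every box spawns exactly $2^D$ children; this is the $2^D$-tree structure used in the partitioning scheme ($D=2$ gives a quadtree, $D=3$ an octree). Consequently, starting from the single enclosing box at depth $0$, after $k$ divisions the full grid contains $(2^D)^k = 2^{Dk}$ boxes.

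Next I would pin down what ``maximum number of divisions'' means in the $\fthreem$ setting, where empty boxes are discarded at every level. Because at most $n$ boxes can be non-empty, the number of non-empty boxes is bounded by $\min(n,2^{Dk})$ and saturates once $2^{Dk}$ reaches $n$: beyond that depth, additional divisions can only split boxes that already hold a single point (or create empty boxes that are immediately removed), yielding no further partitioning benefit. Hence the effective maximum depth is the $k$ for which each leaf box holds on the order of one data point, i.e. the smallest $k$ with $2^{Dk}\ge n$.

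I would then solve $2^{Dk}=n$ for $k$. Taking logarithms gives $Dk\log 2=\log n$, so
\begin{equation*}
k=\frac{\log n}{D\log 2}=\frac{\log_2 n}{D}=\log_{2^D} n,
\end{equation*}
which is exactly $\text{Tree}_{\texttt{max divisions}}$.

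The main obstacle is not the algebra but justifying the stopping condition rigorously: $\log_{2^D} n$ need not be an integer, so the statement is best read as the real-valued depth at which the box count equals $n$, an upper bound on the useful number of divisions that is rounded up in practice via $\lceil\cdot\rceil$. I would also make explicit the mild assumption that the data are spread out enough that non-empty boxes keep splitting until they isolate single points; for highly clustered data the \emph{small field} rule terminates recursion earlier, so $\log_{2^D} n$ functions as a worst-case ceiling on depth rather than an exact value attained for every dataset.
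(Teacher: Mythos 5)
Your proof is correct and follows essentially the same route as the paper: the paper likewise solves $\frac{n}{2^{D\cdot k}}=1$ for the depth $k$ at which boxes hold on the order of one point each, yielding $k=\log_{2^D}(n)$. Your additional remarks on integrality and on clustered data terminating recursion early are sound refinements but not a different argument.
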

With the above results, the complexity of FFM is taken as the maximum number of divisions multiplied by the complexity of far-field interactions at each division which yields $\mathcal{O}(n\log{(n)})$. We remark that near-field interactions between boxes containing only 1 data point have linear time complexity, hence the results hold.

\begin{theorem}
Given a KMVM with edge $\mathcal{E}$ (dependent on data $\mathcal{X},\mathcal{Y}$), lengthscale $\gamma$, effective variance limit $\eta$, $n$ data points and data dimension $D$, $\fthreem$ has time complexity $\mathcal{O}(n\cdot \log_2\left(\frac{D \cdot  \mathcal{E}^2}{\gamma^2 \cdot 4 \cdot \eta} \right))$, which can be taken as $\mathcal{O}(n\cdot \log_2\left(\frac{C}{\eta} \right))$ where $C\propto \frac{D\cdot \mathcal{E}^2}{\gamma^2}$. 
\end{theorem}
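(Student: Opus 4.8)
The plan is to mirror the complexity argument already sketched for FFM, replacing the ``maximum number of divisions'' $\log_{2^D}(n)$ by the depth at which the \emph{smoothness criterion} first becomes satisfied. The conceptual shift is that in $\fthreem$ the recursion halts as soon as \emph{every} remaining box pair, including adjacent ones, may be safely interpolated (with error $\mathcal{O}(\eta^{p+1})$ by the first Proposition), rather than when boxes are exhausted of points. Concretely, I would write the edge of a box at recursion depth $t$ as $l_t = \mathcal{E}/2^{t}$ and invoke the $\mathcal{O}(1)$ per-cube variance bound $\mathcal{E}^2/4$, so that the simplified smoothness criterion for an interaction at depth $t$ reads $\frac{D\, l_t^2}{4\gamma^2}\le\eta$.

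The first and key step is to solve this inequality for $t$. Substituting $l_t=\mathcal{E}/2^{t}$ gives $\frac{D\mathcal{E}^2}{4\gamma^2 2^{2t}}\le\eta$, i.e. $2^{2t}\ge\frac{D\mathcal{E}^2}{4\gamma^2\eta}$, so the smallest depth at which all remaining far-field and smooth interactions can be interpolated is
\[
  t^\star=\left\lceil \tfrac{1}{2}\log_2\!\left(\frac{D\mathcal{E}^2}{4\gamma^2\eta}\right)\right\rceil .
\]
The point of this computation is that it converts the geometric stopping rule into an explicit recursion depth depending only on the chosen parameter $\eta$ and the data/kernel constants $D,\mathcal{E},\gamma$, and crucially \emph{independent of $n$}.

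Next I would combine $t^\star$ with the per-level cost. By Proposition~2 each far-field interaction between boxes of sizes $n_x,n_y$ costs $\mathcal{O}(\max(n_x,n_y))$; since each box interacts with only a constant number (in $D$) of neighbouring boxes and the box sizes at any fixed level sum to $n$, the total work at each level is $\mathcal{O}(n)$, exactly as in the FFM derivation, and the remaining near-field/small-field interactions at the stopping level involve boxes with a bounded number of points (the remark following Proposition~3), also contributing $\mathcal{O}(n)$. Multiplying the per-level cost $\mathcal{O}(n)$ by the number of levels $t^\star$ and absorbing the factor $\tfrac12$ and the ceiling into the $\mathcal{O}$-constant yields
\[
  \mathcal{O}\!\left(n\cdot\log_2\!\left(\frac{D\mathcal{E}^2}{4\gamma^2\eta}\right)\right)
  =\mathcal{O}\!\left(n\log_2\!\left(\tfrac{C}{\eta}\right)\right),\qquad C\propto\frac{D\mathcal{E}^2}{\gamma^2},
\]
as claimed.

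The main obstacle I anticipate is not the algebra but two bookkeeping points that make the per-level bound and the overall depth legitimate. I would need to argue that the number of interactions incident to any one box is bounded by a constant depending only on $D$, so that summing Proposition~2 over a level collapses to $\mathcal{O}(n)$ rather than something larger, and I would need to state the standing assumption that the smoothness depth binds before points run out, i.e. $t^\star\le\log_{2^D}(n)$; outside this regime the recursion terminates at $\log_{2^D}(n)$ and the bound gracefully reverts to the FFM estimate $\mathcal{O}(n\log n)$. I would also flag that the simplified criterion replaces the true sample variance by its per-cube upper bound, so $t^\star$ is an over-estimate of the true stopping depth and the stated complexity is therefore an upper bound in the safe direction.
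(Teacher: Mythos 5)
Your proposal is correct and follows essentially the same route as the paper's own proof: convert the $\mathcal{O}(1)$ smoothness criterion into an explicit stopping depth that depends only on $D,\mathcal{E},\gamma,\eta$ (and not on $n$), take this as $\text{Tree}_{\texttt{max divisions}}$, and multiply by the $\mathcal{O}(n)$ per-level cost. If anything, you are slightly more careful than the paper: you square the box edge correctly (giving $2^{2t}$ in the denominator and a harmless factor $\tfrac12$ absorbed into the $\mathcal{O}$, whereas the paper's proof writes $2^{\texttt{tree\_depth}}$), and you make explicit two assumptions the paper leaves implicit, namely the constant number of interactions per box at each level and the regime $t^\star\le\log_{2^D}(n)$ outside of which the bound reverts to the FFM estimate $\mathcal{O}(n\log n)$.
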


\textbf{Memory footprint}\quad As our implementation uses the same partitioning strategy as FFM, the \emph{theoretical} memory complexity remains $\mathcal{O}(n)$ for $\fthreem$ (see \cite{aussal2019fast} for proof). However, this does not accurately reflect the memory footprint of the actual implementations, whose memory mostly depends on the number of interactions stored. We summarize these memory footprints for FFM and $\fthreem$ in \Cref{th2} below.

\begin{theorem}
\label{thm2}
The number of interactions $M_i$ against tree depth $i$ of FFM and $\fthreem$ grows as $\mathcal{O}\left(M_{i-1}2^{2\cdot D }- m_{i}^{\text{far}} \right)$ and $$\mathcal{O}\left(M_{i-1} 2^{2\cdot D}- (m_i^{\text{empty}})^2 - m_{i}^{\text{far}}-m_{i}^{\text{smooth}}-m_{i}^{\text{small}} \right)$$ respectively.  Here $M_{-1} = \frac{1}{2^{2D}}$ and $m_{0}^{\text{far}}=m_{0}^{\text{smooth}}=m_{0}^{\text{small}}=m_0^{\text{empty}}=0$ and $m_{i}^{\text{far}},m_{i}^{\text{smooth}},m_{i}^{\text{small}},m_i^{\text{empty}}$ denotes the number of far-field, smooth field, small field interactions and the number of empty boxes respectively at depth $i>0$.
\vspace{-0.2cm}
\label{th2}
\end{theorem}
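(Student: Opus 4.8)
The plan is to prove both recurrences by induction on the tree depth $i$, handling FFM first and then layering on the extra prunings that distinguish $\fthreem$. Throughout I read $M_i$ as the number of \emph{surviving} interactions at depth $i$, i.e. the near-field box-pairs that are passed forward to be subdivided at depth $i+1$ (the far-field, smooth, small, and empty ones being resolved or discarded and hence not carried on). The convention $M_{-1}=\frac{1}{2^{2D}}$ is engineered precisely so that one application of the FFM recurrence gives $M_0 = 2^{2D}M_{-1}-m_0^{\text{far}} = 2^{2D}\cdot 2^{-2D}-0 = 1$, which is the single root interaction between the box enclosing all of $\mathbf{X}$ and the box enclosing all of $\mathbf{Y}$; together with $m_0^{\text{far}}=m_0^{\text{smooth}}=m_0^{\text{small}}=m_0^{\text{empty}}=0$ this fixes the base case and anchors the induction.

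First I would isolate the branching factor. At each division every box is split along all $D$ coordinate axes into $2^D$ children (the $2^D$-ary subdivision shown in \Cref{smart_division}). Since an interaction is an ordered pair consisting of one $\mathbf{X}$-box and one $\mathbf{Y}$-box, subdividing both members of a surviving interaction from depth $i-1$ produces $2^D\cdot 2^D = 2^{2D}$ candidate child interactions at depth $i$. Thus the gross number of candidates generated at depth $i$ is $2^{2D}M_{i-1}$, and each recurrence is obtained by subtracting the candidates that are resolved or pruned at that depth and therefore not propagated.

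For FFM the only candidates removed at depth $i$ are those newly classified as far-field under the rule $\Vert \mathbf{x}_{\text{center}}-\mathbf{y}_{\text{center}}\Vert \ge 2l$; there are $m_i^{\text{far}}$ of these by definition, yielding $M_i = 2^{2D}M_{i-1}-m_i^{\text{far}}$ and hence the stated $\mathcal{O}\big(M_{i-1}2^{2D}-m_i^{\text{far}}\big)$. For $\fthreem$ the same bookkeeping applies, but a candidate is now dropped if it is far-field ($m_i^{\text{far}}$), smooth-field ($m_i^{\text{smooth}}$, per the criterion in \Cref{smooth_paragraph}), small-field ($m_i^{\text{small}}$, triggered when $|B_p^X|+|B_q^Y|\le\rho$), or if it touches an empty box. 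Subtracting these (essentially disjoint) categories from the $2^{2D}M_{i-1}$ candidates produces the second recurrence, modulo the empty-box term discussed next.

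The one step I expect to be the real obstacle — the only subtraction that is not exact bookkeeping — is justifying the quadratic empty-box contribution $(m_i^{\text{empty}})^2$ rather than a linear one. A naive inclusion--exclusion over the full product grid would give a term of order (empty children)$\times$(all children), which is linear in $m_i^{\text{empty}}$ and would not match the statement. The argument I would give instead exploits \emph{locality and clustering}: empty regions of space are contiguous, so the $m_i^{\text{empty}}$ empty children produced in such a region are spatially clustered and therefore mutually near-field, and the near-field interactions \emph{among} these empty boxes number $\Theta\big((m_i^{\text{empty}})^2\big)$ — a full quadratic block that would otherwise have been carried forward and multiplied by $2^{2D}$ at every subsequent level. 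The empty--nonempty interactions, by contrast, live only on the boundary of the empty region and are of lower order. I would make this precise by writing $m_i^{\text{empty}}:=\max\big(m_i^{\text{empty},X},m_i^{\text{empty},Y}\big)$, noting that in the self-interaction case $\mathbf{X}=\mathbf{Y}$ the two empty sets coincide exactly so the removed block is literally $m_i^{\text{empty}}\times m_i^{\text{empty}}$, and absorbing the cross-terms and the boundary contributions into the $\mathcal{O}(\cdot)$. This is the sole place where the $\mathcal{O}$ in \Cref{thm2} is doing genuine work; the far-, smooth-, and small-field removals are exact counts requiring no such slack.
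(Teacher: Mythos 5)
Your induction skeleton---the base case anchored by $M_{-1}=\frac{1}{2^{2D}}$, the branching factor $2^{2D}$ obtained by subdividing both members of each surviving pair, and the exact subtraction of the far-, smooth-, and small-field counts---matches the paper's proof, which likewise proceeds by induction and treats FFM as the special case of $\fthreem$ with no empty/smooth/small removal. The divergence, and the genuine gap, is your treatment of the $(m_i^{\text{empty}})^2$ term. The paper needs no locality or clustering argument at all: an interaction is a pair consisting of one $\mathbf{X}$-box and one $\mathbf{Y}$-box, so the candidate interactions in which \emph{both} members are empty number $(m_i^{\text{empty}})^2$, and all of these are discarded because an interaction between empty boxes contributes nothing. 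That direct pair count is the entire justification of the quadratic term.

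Your argument instead asserts that empty regions are contiguous, hence the empty boxes are mutually near-field, hence they would have contributed a quadratic block of near-field interactions. This step would fail in general: whether empty boxes cluster is a property of the data (for the clustered or Brownian-motion datasets in the paper, empty boxes appear scattered across the bounding box), and the theorem places no restriction on the data. It is also beside the point---a pair of empty boxes is removed regardless of the distance between them, so near/far status plays no role in the count. Likewise, your concern that one ``ought'' to also subtract the empty--nonempty pairs (giving a linear term that contradicts the statement) is a red herring: the recurrence is an $\mathcal{O}(\cdot)$ upper bound on the growth of $M_i$, so subtracting only the empty--empty block (and not every interaction touching an empty box) merely under-subtracts and weakens, never invalidates, the bound. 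Replace the clustering argument with the direct pair count and your proof coincides with the paper's.
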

We see that the additional approximations presented in $\fthreem$ also impacts memory footprint, as the additional $(m_i^{\text{empty}})^2,m_{i}^{\text{smooth}},m_{i}^{\text{small}}$ terms removes a substantial amount of interactions at each $i$, significantly slowing down the growth of interactions, reducing memory growth. \textcolor{black}{The efficacy of $(m_i^{\text{empty}})^2,m_{i}^{\text{smooth}},m_{i}^{\text{small}}$ is widely dependent on data. As an example, data with points very close to each other would significantly benefit $m_{i}^{\text{smooth}}$ more, as the closeness of points would imply more smooth interactions. If points are sparsely spread out, $(m_i^{\text{empty}})^2,m_{i}^{\text{small}}$ would provide the most benefit as they remove empty boxes and stops boxes with few points to divide unnecessarily.} All proofs can be found in \Cref{complexity_appendix_2}.
\vspace{-0.3cm}
\section{Experiments}
\vspace{-0.3cm}
\label{sec:experiments}
We demonstrate the utility of $\fthreem$ over a variety of experiments using the Gaussian kernel $k(\mathbf{x},\mathbf{y})=\exp{-(\frac{\Vert \mathbf{x}-\mathbf{y}\Vert^2}{2\gamma^2})}$.\footnote{$\fthreem$ is kernel agnostic, however we choose the Gaussian kernel for simplicity.} We generate data such that the EV (see section \ref{smooth_paragraph}) varies between $0.1, 1, 10$ for data of sizes $n=10^6,10^7,10^8,10^9$. The parameters used for $\fthreem$ are $\eta=0.1,0.2,0.3,0.5$ and $r=2^D,3^D,4^D$ with a cap at $r=2048$. The error for the approximated KMVM product $\hat{\mathbf{v}}$ is calculated as $\text{Relative error}:= \frac{\Vert \hat{\mathbf{v}}-\mathbf{v}\Vert^2}{\Vert\mathbf{v}\Vert^2}$, where the true KMVM product $\mathbf{v}$ is obtained by calculating the full KMVM on a subset $\mathbf{X}'$ consisting  of the first 5000 points in $\mathbf{X}$ against the entire dataset in double precision, i.e. $\mathbf{v}=k(\mathbf{X}',\mathbf{X})\cdot \mathbf{b}$, where we fix $\mathbf{b} \sim \mathcal{N}(0,I_{n})$.
All experiments were run on NVIDIA V100-32GB cards, where the data is fitted entirely on the GPU. These cards were chosen since the extra graphic memory is necessary to fit the data on one card when $n=10^9$. It should be noted that $n=10^9$ can only be run up to $D=3$, as $\mathbf{X}$ and $\mathbf{b}$ itself cannot fit in memory for higher dimensions with the GPUs we had available. For details on how $\fthreem$ scales across multiple GPUs, see Appendix \ref{scalability_analysis}.
\begin{table}[htb!]
\vspace{-0.2cm}
\centering
\caption{\small{$\fthreem$ (GPU) compared to results reported in \cite{aussal2019fast} for FFM(CPU). $\fthreem$ achieves a $90\times$ speed up on a billion data points. $\fthreem$ used parameters $r=64$ and $\eta=0.5$}}
\label{vsFFM}
\resizebox{\linewidth}{!}{%

\begin{tabular}{llll|llll}
\hline
       & \multicolumn{3}{c|}{\textbf{FFM (12 CPU cores)}} & \multicolumn{3}{c}{\textbf{$\fthreem$ (GPU, Ours)}}                       &                      \\ \hline
n      & Time (s)    & Error                  & Memory    & Time (s)          & Error                                 & Memory        & \textbf{Speedup}     \\ \hline
$10^6$ & $33.4$      & $1.35\cdot 10^{-4}$    & 100 MB    & $0.08\pm0.00$     & $3\cdot 10^{-4}\pm 7 \cdot 10^{-5} $  & $\sim$ 28 MB  & \textbf{$417\times$} \\
$10^7$ & $169$       & $1.98\cdot 10^{-4}$    & 1GB       & $1.16\pm0.04$     & $3\cdot 10^{-4}\pm 1.2 \cdot 10^{-4}$ & $\sim$ 280 MB & \textbf{$145\times$} \\
$10^8$ & $1499$      & $1.81\cdot 10^{-4}$    & 10 GB     & $12.45\pm 0.06$   & $2\cdot 10^{-4}\pm 5 \cdot 10^{-5}$   & $\sim$ 2.8 GB & \textbf{$120\times$} \\
$10^9$ & $11340$     & $3.11\cdot 10^{-4}$    & 100 GB    & $125.90 \pm 0.52$ & $3\cdot 10^{-4}\pm 1.3 \cdot 10^{-5}$ & $\sim$ 28 GB  & \textbf{$90\times$}  \\ \hline
\end{tabular}
}

\end{table}

\begin{table}[htb!]
\vspace{-0.2cm}
\caption{\small{Run time and relative error of all KMVM experiments for $\fthreem$. The slope is computed by regressing $\log_{10}(\text{Time (s)})$ against $\log_{10}(n)$. A slope of 1 implies $\mathcal{O}(n)$ scaling.}}
\label{all_exp}
\resizebox{\linewidth}{!}{%
\begin{tabular}{llllllll|lllllll}
\hline
                        & \multicolumn{7}{c|}{\textbf{Time(s)}}                                                                                                                                                                                   & \multicolumn{7}{c}{\textbf{Relative Error}}                                                                                                                                                                                                 \\
$n / D$                 & 1                          & 2                           & 3                            & 4                             & 5                             & 6                             & 7                             & 1                               & 2                               & 3                               & 4                               & 5                               & 6                               & 7                               \\ \hline
$10^6$                  & $\makecell{0.2\\ \pm0.1}$  & $\makecell{0.2\\ \pm0.2}$   & $\makecell{0.2\\ \pm0.1}$    & $\makecell{0.4\\ \pm0.2}$     & $\makecell{0.9\\ \pm0.9}$     & $\makecell{3.0\\ \pm2.4}$     & $\makecell{3.1\\ \pm2.1}$     & $\makecell{0.0018\\ \pm0.0021}$ & $\makecell{0.0023\\ \pm0.0028}$ & $\makecell{0.0005\\ \pm0.0009}$ & $\makecell{0.0014\\ \pm0.0013}$ & $\makecell{0.0022\\ \pm0.0022}$ & $\makecell{0.0303\\ \pm0.0314}$ & $\makecell{0.0294\\ \pm0.0282}$ \\
$10^7$                  & $\makecell{0.7\\ \pm0.3}$  & $\makecell{1.1\\ \pm0.4}$   & $\makecell{1.7\\ \pm0.7}$    & $\makecell{4.0\\ \pm2.4}$     & $\makecell{12.7\\ \pm10.0}$   & $\makecell{79.9\\ \pm88.6}$   & $\makecell{88.2\\ \pm90.2}$   & $\makecell{0.0016\\ \pm0.0021}$ & $\makecell{0.0019\\ \pm0.0023}$ & $\makecell{0.0006\\ \pm0.0013}$ & $\makecell{0.0017\\ \pm0.0019}$ & $\makecell{0.0057\\ \pm0.0032}$ & $\makecell{0.0325\\ \pm0.0261}$ & $\makecell{0.0273\\ \pm0.0179}$ \\
$10^8$                  & $\makecell{3.5\\ \pm1.1}$  & $\makecell{7.4\\ \pm2.1}$   & $\makecell{17.1\\ \pm7.3}$   & $\makecell{40.6\\ \pm28.7}$   & $\makecell{76.0\\ \pm83.2}$   & $\makecell{525.1\\ \pm410.9}$ & $\makecell{512.9\\ \pm471.4}$ & $\makecell{0.002\\ \pm0.0026}$  & $\makecell{0.0025\\ \pm0.0032}$ & $\makecell{0.0007\\ \pm0.0013}$ & $\makecell{0.0023\\ \pm0.0021}$ & $\makecell{0.0051\\ \pm0.0032}$ & $\makecell{0.0376\\ \pm0.0239}$ & $\makecell{0.0444\\ \pm0.017}$  \\
$2.5 \cdot 10^8$        & N/A                        & N/A                         & N/A                          & $\makecell{76.7\\ \pm25.9}$   & $\makecell{340.7\\ \pm247.6}$ & OOM                           & OOM                           & N/A                             & N/A                             & N/A                             & $\makecell{0.0034\\ \pm0.0029}$ & $\makecell{0.0076\\ \pm0.0012}$ & N/A                             & N/A                             \\
$5 \cdot10^8$           & N/A                        & N/A                         & $\makecell{74.9\\ \pm28.5}$  & $\makecell{289.4\\ \pm222.8}$ & $\makecell{631.5\\ \pm997.0}$ & OOM                           & OOM                           & N/A                             & N/A                             & $\makecell{0.0012\\ \pm0.0013}$ & $\makecell{0.0023\\ \pm0.0026}$ & $\makecell{0.0041\\ \pm0.003}$  & N/A                             & N/A                             \\
$10^9$                  & $\makecell{29.4\\ \pm8.9}$ & $\makecell{73.8\\ \pm23.4}$ & $\makecell{174.0\\ \pm76.6}$ & OOM                           & OOM                           & OOM                           & OOM                           & $\makecell{0.0024\\ \pm0.0024}$ & $\makecell{0.0025\\ \pm0.0027}$ & $\makecell{0.0009\\ \pm0.0018}$ & N/A                             & N/A                             & N/A                             & N/A                             \\ \hdashline
Slope                   & \textbf{0.78}              & \textbf{0.85}               & \textbf{0.99}                & \textbf{0.98}                 & \textbf{0.99}                 & \textbf{1.06}                 & \textbf{1.02}                 &                                 &                                 &                                 &                                 &                                 &                                 &                                 \\
$\mathcal{O}(n\log(n))$ & \multicolumn{7}{c|}{1.11}                                                                                                                                                                                               &                                 &                                 &                                 &                                 &                                 &                                 &                                 \\ \hline
\end{tabular}
}

\end{table}

\textbf{KMVM experiments}\quad We consider a wide variation of generated datasets to simulate different real-world scenarios to test $\fthreem$ on. For the $k(\mathbf{X},\mathbf{X})$-case we consider uniformly and normally distributed data ($D=1,2,3,4,5,6,7$) together with data simulated from Brownian motion, fractional Brownian motion, and Clustered data ($D=1,2,3$). For the $k(\mathbf{X},\mathbf{Y})$-case we consider uniformly distributed $\bfx$ and normal distributed $\bfy$ ($D=1,2,3,4,5,6,7$). See Appendix \ref{fig:datasets_illustr} for visualizations of data. We have to consider smaller $n$ for the $k(\mathbf{X},\mathbf{Y})$-case when $D\geq3$, as twice the amount of data needs to be stored. For $D=3/(4,5)/(6,7)$ we instead consider at most $n=5\cdot 10^8/2.5\cdot 10^8/10^8$. It should be noted that $D=7$ is a hard limit for geometric partitioning-based methods, since for $D=8$, we would have $2^{8\cdot 2} \cdot 2^{8\cdot 2}\approx 4.3 \cdot 10^9$ interactions after only 2 divisions. This number of interactions cannot even be represented by a 32-bit integer. We summarize the runs in \Cref{all_exp} and plot the error and time complexity in \Cref{breakdown3d} for each dataset when $D=3$. We find that $\fthreem$ maintains sub-linear empirical complexity up to $D=6$, where we have to set \emph{small field} limit $\rho$ to a larger number to not run out of memory. Further, the error increases in the higher dimensions since we use fewer nodes per dimension when interpolating, owing to the \emph{sparse grid} technique. We note that $D=7$ has faster run times than $D=6$ which is explained by that for some values of EV, $D=7$ doesn't run with acceptable errors which skew the run time to datasets where a larger portion of the data can be interpolated. 

\begin{figure}[htb!]
\begin{subfigure}[H]{\linewidth}
\includegraphics[width=0.38\linewidth]{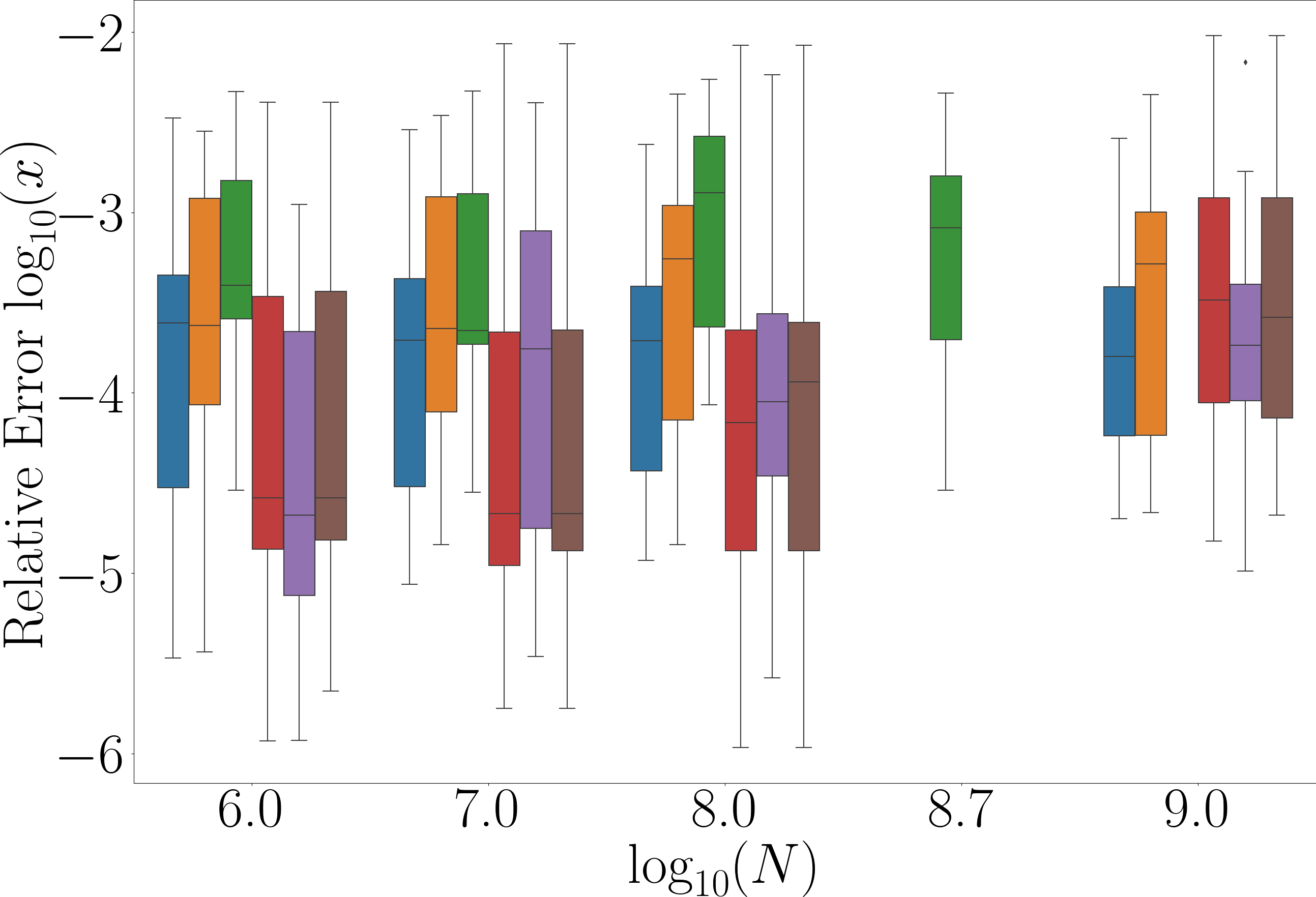}%
\includegraphics[width=0.61\linewidth]{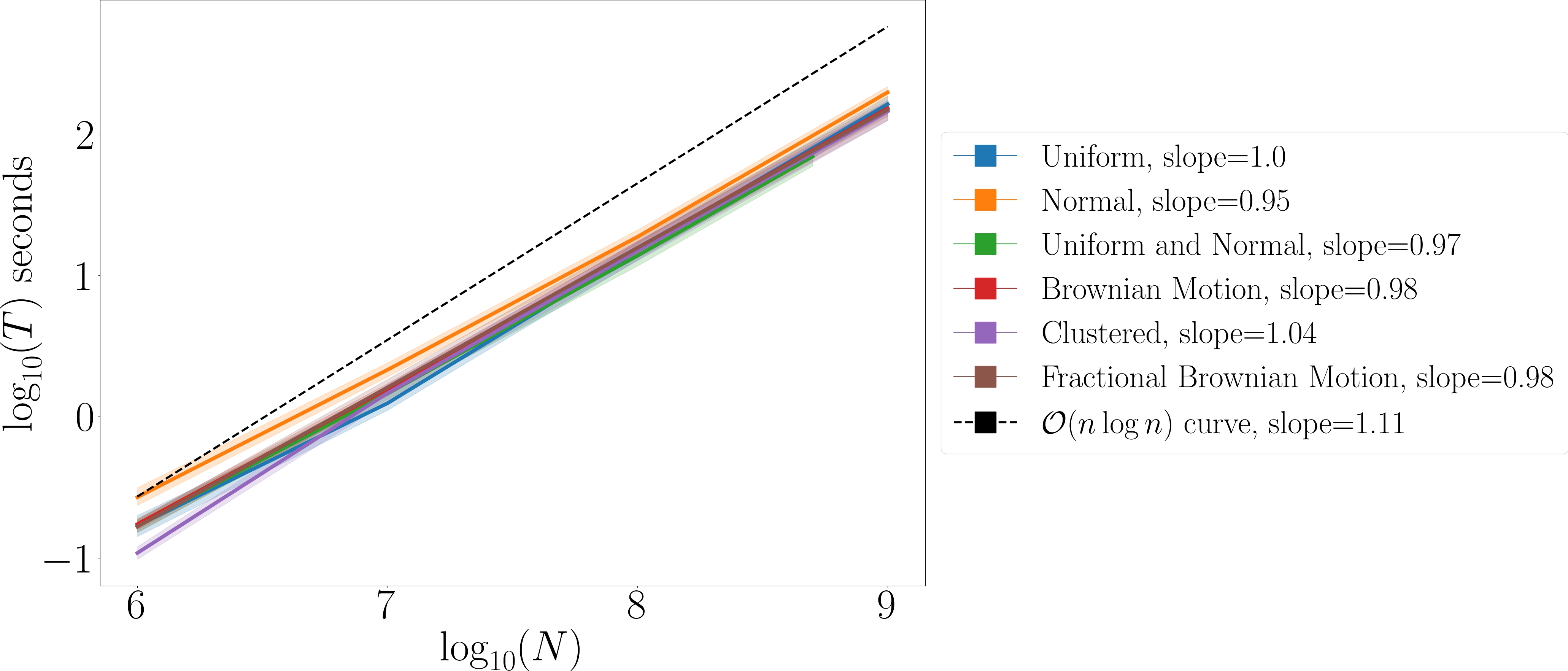}
\end{subfigure}
\caption{\small{Relative error and time complexity for each 3D dataset.}}
\vspace{-0.4cm}
\label{breakdown3d}
\end{figure}

We further replicate the data used in the first experiment in \cite{aussal2019fast} and compare $\fthreem$ against FFM (CPU) in \Cref{vsFFM}. 


\textbf{Kernel Ridge Regression experiment}\quad We apply $\fthreem$ to FALKON \cite{falkon}, where we replace their KMVM operation with $\fthreem$ and compare performance and speed in solving Kernel Ridge Regression (KRR). The KMVM operation currently used for smaller dimensions is KeOps \cite{charlier2020kernel}. Given some data $\mathbf{X}\in \mathbb{R}^{N\times d}$ we want to find the solution $\mathbf{\alpha} = \left( k(\mathbf{X},\mathbf{X})+\lambda I \right)^{-1}\mathbf{b}$ where $\lambda$ is the ridge parameter that stabilizes the inverse. FALKON is a Nyström approximation based solver that requires a subsample $\mathbf{X}'\in \mathbb{R}^{M\times d}$ of $\mathbf{X}$ to approximate the inverse computation. We focus the experiments on tall and skinny data and take $n=10^9, d\leq 3$ with $M=10^5$ for all experiments. We consider uniformly and normally sampled data, the Open Street Map (OSM) dataset \cite{osm_dataset} and a classification task on the NYC Taxi dataset~\cite{taxi_dataset}, where we predict whether the customer will tip based on trip distance, trip time and fare cost. To construct $\mathbf{b}$ on synthetic problems, we first take a subset $\mathcal{D}\in \mathbb{R}^{1000\times d}$ of $\mathbf{X}$ and sample $\alpha \sim \mathcal{N}(0,I_{1000\times 1000})$. We then calculate $\mathbf{b}=k(\mathbf{X},\mathcal{D})\cdot \alpha + \varepsilon$, where $\varepsilon\sim \mathcal{N}(0,0.1)$. We run KRR for $EV=0.1,1,10$ on synthetic data, and report the average $R^2$ (AUC for NYC Taxi) and training time in \Cref{FALKON_exp}. For the real world datasets OSM and NYC Taxi, we fix the lengthscale using the median heuristic proposed in \cite{garreau2017large} averaged our results over the 3 runs.

\begin{table}[t!]
\caption{\small{FALKON using default KMVM vs FALKON with $\fthreem$.}}
\label{FALKON_exp}
\resizebox{\linewidth}{!}{%

\begin{tabular}{llll|ll|ll|ll} 
\hline
         &        &     &        & \multicolumn{2}{c|}{\textbf{FALKON with default KMVM}} & \multicolumn{2}{c|}{\textbf{FALKON with $\fthreem$}} &            &                          \\
Dataset  & $n$    & $D$ & $M$    & $R^2$                  & Time (s)             & $R^2$                  & Time (s)           & Error diff & \textbf{Speedup}  \\ 
\hline
Uniform  & $10^9$ & 3   & $10^5$ & $0.975\pm 0.034$       & $7631\pm 2$          & $0.976\pm 0.038$       & $2234\pm 429$      & 0\%        & 5.31                     \\
Normal   & $10^9$ & 3   & $10^5$ & $0.893\pm 0.118$       & $7631\pm 2$          & $0.902\pm 0.114$       & $2234\pm 429$      & 1\%        & 3.41                     \\
OSM      & $10^9$ & 2   & $10^5$ & $0.932\pm0.056$        & $6752 \pm 13$        & $0.943\pm 0.043$       & $1670\pm48$        & 1.2\%      & 4.04                     \\
NYC Taxi & $10^9$ & 3   & $10^5$ & $0.526\pm 0.029$ (AUC) & $6963\pm 69$         & $0.526\pm 0.030$ (AUC) & $4535\pm 7$        & 0\%        & 1.53                     \\
\hline
\end{tabular}
}
\vspace{-0.2cm}
\end{table}
\textbf{Ablation study between FFM(GPU) and $\text{F}^{3}$M}\quad As much of the improved performance can be attributed to our GPU implementation, we conduct an ablation study of FFM(GPU) against $\fthreem$ and KeOps in \Cref{ablation}. We first present KMVM run times averaged over $D=3$ and real-world datasets OSM and NYC Taxi. For KeOps, we only computed the KMVM on uniform data. Since KeOps is an exact method, the dataset distribution has no effect on computational time. Here, the \emph{smoothness criteria} and \emph{adaptive far-field} technique improve computational time. We find that $\fthreem$ achieves a speed-up between $2.0-33.3\times$ against FFM(GPU) and $8.0-8500\times$ speed-up against KeOps. 

\begin{table}[htb!]
\vspace{-0.4cm}

\caption{\small{Comparison between $\fthreem$, FFM(GPU) and KeOps. It should be noted that KeOps is only run up to $n=10^8$ for all experiments (a run for $n=10^9$ would take weeks). The times for $n>10^8$ are extrapolated for KeOps. FFM(GPU) could only run on uniform data for $n=10^9$.}}
\label{ablation}
\resizebox{\linewidth}{!}{%
\centering
\begin{tabular}{l|lll|lll|l|l|lll} 
\hline
                                                                 & \multicolumn{3}{c|}{\textbf{$\text{F}^3$M time (s)}}                                                                                                & \multicolumn{3}{c|}{\textbf{FFM(GPU) time (s)}}                                                & \multicolumn{1}{c|}{\textbf{KeOps time (s)}}                & \multicolumn{1}{c|}{\begin{tabular}[c]{@{}c@{}}\textbf{Speedup}\\\textbf{vs KeOps}\end{tabular}} & \multicolumn{3}{c}{\begin{tabular}[c]{@{}c@{}}\textbf{Speedup vs}\\\textbf{(GPU)}\end{tabular}}  \\
                                                                 & OSM                           & Taxi                          & $D=3$                                                               & OSM                       & Taxi                        & $D=3$                       & $D=3$                                              &                                                                                                  & OSM          & Taxi          & $D=3$                                                                \\
n                                                                & ($D=2$)                       & ($D=3$)                       &                                                                     & ($D=2$)                   & ($D=3$)                     &                             &                                                    &                                                                                                  & ($D=2$)      & ($D=3$)       &                                                                      \\ 
\hline
$10^6$                                                           & $\makecell{0.1 \pm 0.0}$      & $\makecell{0.5 \pm 0.2}$      & $\makecell{0.2 \pm 0.1}$                                            & $\makecell{0.7 \pm 0.2}$  & $\makecell{1.2 \pm 0.2}$    & $\makecell{0.4 \pm 0.2}$    & 1.56                                               & 8.0                                                                                              & \textbf{7.0} & \textbf{2.4}  & \textbf{2.0}                                                         \\
$10^7$                                                           & $\makecell{1.1 \pm 0.4}$      & $\makecell{3.3 \pm 0.6}$      & $\makecell{1.7 \pm 0.7}$                                            & $\makecell{10.1 \pm 0.8}$ & $\makecell{110.0 \pm 1.9}$  & $\makecell{3.7 \pm 1.0}$    & 148.6                                              & 87.0                                                                                             & \textbf{9.2} & \textbf{33.3} & \textbf{2.2}                                                         \\
$10^8$                                                           & $\makecell{10.8 \pm 3.9}$     & $\makecell{30.4 \pm 6.9}$     & $\makecell{17.1 \pm 7.3}$                                           & OOM                       & $\makecell{769.5 \pm 27.9}$ & $\makecell{43.7 \pm 12.2}$  & 1.49e4                                             & 873.0                                                                                            & N/A          & \textbf{25.3} & \textbf{2.6}                                                         \\

$10^9$                                                           & $\makecell{101.8 \pm 36.6}$ & $\makecell{290.0 \pm 69.4}$ & $\makecell{174.0 \pm 76.6}$                                       & OOM                       & OOM                         & $\makecell{517.4 \pm 60.8}$ & 1.49e6                                             & 8583.0                                                                                           & N/A          & N/A           & \textbf{3.0}                                                         \\ 
\cdashline{1-8}
Error                                                            & 0.0005                        & 0.0012                        & 0.0008                                                              & 0.0005                    & 0.0002                      & 0.0003                      & 0.0                                                &                                                                                                  &              &               &                                                                      \\ 
\hline
\begin{tabular}[c]{@{}l@{}}Theoretical \\Complexity\end{tabular} & \multicolumn{3}{c|}{$\mathcal{O}\left(n\cdot \log_2\left(\frac{D \cdot \mathcal{E}^2}{\gamma^2 \cdot 4 \cdot \eta} \right)\right)$} & \multicolumn{3}{c|}{$\mathcal{O}\left(n \log{(n)}\right)$}                            & \multicolumn{1}{c|}{$\mathcal{O}\left(n^2\right)$} & \multicolumn{1}{l}{}                                                                             &              &               &                                                                      \\
\hline
\end{tabular}
}

\vspace{-0.2cm}
\end{table}

\textbf{Ablation study between $\fhalfm$ and $\text{F}^{3}$M}\quad We provide and additional ablation study between $\fhalfm$ and $\fthreem$ in \Cref{ablation_2}. The results are quite similar to the comparison between $\fthreem$ and FFM(GPU). Here, we see that smooth field and adaptive far-field approximation ($\fthreem$) both improve speed and also memory usage as smooth field helps approximate more interactions. We can thus infer empirically that the $m_i^{\text{smooth}}$ term in \Cref{thm2} has a significant impact on reducing the memory footprint of interactions.

\begin{table}[htb!]
\vspace{-0.4cm}

\caption{\small{Comparison between $\fthreem$, $\fhalfm$ and KeOps. It should be noted that KeOps is only run up to $n=10^8$ for all experiments (a run for $n=10^9$ would take weeks). The times for $n>10^8$ are extrapolated for KeOps. }}
\label{ablation_2}
\resizebox{\linewidth}{!}{%
\centering
\begin{tabular}{l|lll|lll|l|l|lll} 
\hline
                                                                 & \multicolumn{3}{c|}{\textbf{$\text{F}^3$M time (s)}}                                                                                & \multicolumn{3}{c|}{\textbf{$\fhalfm$} \textbf{time (s)}}                                      & \multicolumn{1}{c|}{\textbf{KeOps time (s)}}       & \multicolumn{1}{c|}{\begin{tabular}[c]{@{}c@{}}\textbf{Speedup}\\\textbf{vs KeOps}\end{tabular}} & \multicolumn{3}{c}{\begin{tabular}[c]{@{}c@{}}\textbf{Speedup vs}\\\textbf{$\fhalfm$}\end{tabular}}  \\
                                                                 & OSM                         & Taxi                        & $D=3$                                                                   & OSM                      & Taxi                        & $D=3$                       & $D=3$                                              &                                                                                                  & OSM           & Taxi                  & $D=3$                                                       \\
n                                                                & ($D=2$)                     & ($D=3$)                     &                                                                         & ($D=2$)                  & ($D=3$)                     &                             &                                                    &                                                                                                  & ($D=2$)       & ($D=3$)               &                                                             \\ 
\hline
$10^6$                                                           & $\makecell{0.1 \pm 0.0}$    & $\makecell{0.5 \pm 0.2}$    & $\makecell{0.2 \pm 0.1}$                                                & $\makecell{1.4\pm 0.3}$  & $\makecell{2.0\pm 0.3}$     & $\makecell{0.3 \pm 0.2}$    & 1.56                                               & 8.0                                                                                              & \textbf{14.0} & \textbf{4.0}          & \textbf{1.5}                                                \\
$10^7$                                                           & $\makecell{1.1 \pm 0.4}$    & $\makecell{3.3 \pm 0.6}$    & $\makecell{1.7 \pm 0.7}$                                                & $\makecell{11.3\pm 0.4}$ & $\makecell{110.8\pm 2.0}$   & $\makecell{4.5 \pm 2.1}$    & 148.6                                              & 87.0                                                                                             & \textbf{10.3} & \textbf{33.6}         & \textbf{2.6}                                                \\
$10^8$                                                           & $\makecell{10.8 \pm 3.9}$   & $\makecell{30.4 \pm 6.9}$   & $\makecell{17.1 \pm 7.3}$                                               & OOM                      & $\makecell{777.4 \pm 21.5}$ & $\makecell{42.5 \pm 13.1}$  & 1.49e4                                             & 873.0                                                                                            & N/A           & \textbf{25.6}         & \textbf{2.5}                                                \\
$10^9$                                                           & $\makecell{101.8 \pm 36.6}$ & $\makecell{290.0 \pm 69.4}$ & $\makecell{174.0 \pm 76.6}$                                             & OOM                      & OOM                         & $\makecell{488.4 \pm 55.3}$ & 1.49e6                                             & 8583.0                                                                                           & N/A           & N/A                   & \textbf{2.8}                                                \\ 
\cdashline{1-8}
Error                                                            & 0.0005                      & 0.0012                      & 0.0008                                                                  & 0.0004                   & 0.0002                      & 0.0016                      & 0.0                                                &                                                                                                  &               &                       &                                                             \\ 
\hline
\begin{tabular}[c]{@{}l@{}}Theoretical \\Complexity\end{tabular} & \multicolumn{3}{c|}{$\mathcal{O}\left(n\cdot \log_2\left(\frac{D \cdot \mathcal{E}^2}{\gamma^2 \cdot 4 \cdot \eta} \right)\right)$} & \multicolumn{3}{c|}{$\mathcal{O}\left(n \log{(n)}\right)$}                           & \multicolumn{1}{c|}{$\mathcal{O}\left(n^2\right)$} &                                                                              &               &  &                                       \\
\hline
\end{tabular}
}

\vspace{-0.2cm}
\end{table}


\textbf{Gaussian process regression experiments} We further compare $\fthreem$ as a drop-in KMVM operation applied to Black-box Matrix Multiplication~\cite{gardner2018gpytorch} for Gaussian Processes, compared to KISS-GP \cite{10.5555/3045118.3045307}, an approximate Gaussian process using cubic interpolation for kernel approximation. We mimic the setup in \citep{alex2019exact} and consider the datasets 3DRoad, Song, Buzz and House Electric, where we apply PCA to the last three datasets and take the 3 first principal components for a fair comparison against KISS-GP, which is limited by $D\leq 3$. We demonstrate the results in \Cref{gpr}. As exact GP using $\fthreem$ demonstrates competitive results even when compared to SVGP~\cite{10.5555/3023638.3023667} and SGPR~\cite{pmlr-v5-titsias09a}, we hypothesize that many high-dimensional datasets conform to the \emph{manifold hypothesis}~\cite{manifold}, allowing $\fthreem$ to be widely applicable out-of-the-box even in high-dimensional settings. 
\begin{table}[htb!]
\vspace{-0.5cm}

\caption{\small{Gaussian process regression results. Exact GP using $\fthreem$ shows improved results and scaling compared to KISS-GP. SGPR and KISS-GP could not scale to the HouseElectric dataset.}}
\label{gpr}
\resizebox{\linewidth}{!}{%

\begin{tabular}{lll|llll|llll} 
\hline
                 &           &     & \multicolumn{4}{c|}{\textbf{RMSE}}                                                                                                                                                                                                                                                     & \multicolumn{4}{c}{\textbf{Training time (s)}}                                                                                                                                                                                                                                           \\
Dataset & $n$       & $d$ & \multicolumn{1}{c}{\begin{tabular}[c]{@{}c@{}}\textbf{Exact GP}\\\textbf{ ($\text{F}^3$M)}\end{tabular}} & \textbf{KISS-GP} & \begin{tabular}[c]{@{}l@{}}\textbf{SGPR}\\\textbf{ ($m=512$)}\end{tabular} & \begin{tabular}[c]{@{}l@{}}\textbf{SVGP}\\\textbf{ ($m=1024$)}\end{tabular} & \multicolumn{1}{c}{\begin{tabular}[c]{@{}c@{}}\textbf{Exact GP}\\\textbf{ ($\text{F}^3$M)}\end{tabular}} & \textbf{KISS-GP}  & \begin{tabular}[c]{@{}l@{}}\textbf{SGPR}\\\textbf{ ($m=512$)}\end{tabular} & \begin{tabular}[c]{@{}l@{}}\textbf{SVGP}\\\textbf{ ($m=1024$)}\end{tabular}  \\ 
\hline
3DRoad           & 278,319   & 3   & \textbf{0.297 $\pm$0.036}                                                                                & 0.314 $\pm$0.01  & 0.661 $\pm$ 0.010                                                          & \multicolumn{1}{c|}{0.481 $\pm$ 0.002}                                      & \textbf{27.8 $\pm$18.0}                                                                                  & 312.9 $\pm$10.8   & 720.5 $\pm$ 330.4                                                          & 2045.1 $\pm$ 191.4                                                           \\
Song             & 329,820   & 90  & \textbf{0.369 $\pm$0.029}                                                                                & 0.57 $\pm$0.298  & 0.803 $\pm$ 0.002                                                          & 0.998 $\pm$ 0.000                                                           & \textbf{7.2 $\pm$3.1}                                                                                    & 1705.2 $\pm$115.6 & 473.3 $\pm$ 187.5                                                          & 2373.3 $\pm$ 184.9                                                           \\
Buzz             & 373,280   & 77  & 0.967 $\pm$0.002                                                                                         & 0.997 $\pm$0.05  & \textbf{0.300$\pm$ 0.004}                                                  & 0.304 $\pm$ 0.012                                                           & \textbf{33.5 $\pm$9.0}                                                                                   & 542.7 $\pm$0.8    & 1754.8 $\pm$ 1099.6                                                        & 2780.8 $\pm$ 175.6                                                           \\
HouseEletric     & 1,311,539 & 9   & 0.308 $\pm$0.006                                                                                         & OOM              & OOM                                                                        & \textbf{0.084 $\pm$ 0.005}                                                  & \textbf{79.8 $\pm$23.1}                                                                                  & N/A               & N/A                                                                        & 22062.6 $\pm$ 282.0                                                          \\
\hline
\end{tabular}
}
\end{table}
\vspace{-0.5cm}
\section{Limitations and Further Research}
\vspace{-0.4cm}
This work has introduced and implemented $\fthreem$ on GPU, which enables fast KMVM for tall and skinny data up to $n=10^9$. $\fthreem$ has improved complexity which also is controllable through $\eta$, and retains linear memory.  Experiments in higher dimensions also exhibit linear complexity, however requiring more nodes for lower errors. $\fthreem$ can further be directly used as a drop-in KMVM operation, as demonstrated with FALKON and Gaussian process regression, achieving significant speedups and competitive performance on both tasks. As an interpolation based approximation method, $\fthreem$ is still limited by the exponential growth of interpolation nodes with respect to $D$, although removing empty boxes, \emph{small field} and sparse grids allow KMVM for $D\leq7$. A fruitful direction would be to extend ideas in $\fthreem$ to accommodate higher-dimensional data by considering randomized partitioning~\cite{pmlr-v139-backurs21a}, decoupling the dependency on $D$ in geometry based partitioning. Further, an exact characterization of how $m_{i}^{\text{far}},m_{i}^{\text{smooth}},m_{i}^{\text{small}},m_i^{\text{empty}}$ grows is left to future work.
\vspace{-0.3cm}
\ack
\vspace{-0.3cm}
The authors sincerely thank Lood van Niekerk and Jean-François Ton for their helpful comments.

\newpage
\bibliographystyle{plainnat} 
\bibliography{example_paper.bib}

\begin{thebibliography}{36}
\providecommand{\natexlab}[1]{#1}
\providecommand{\url}[1]{\texttt{#1}}
\expandafter\ifx\csname urlstyle\endcsname\relax
  \providecommand{\doi}[1]{doi: #1}\else
  \providecommand{\doi}{doi: \begingroup \urlstyle{rm}\Url}\fi

\bibitem[osm()]{osm_dataset}
https://examples.pyviz.org/osm/osm-1billion.html.
\newblock \url{https://examples.pyviz.org/osm/osm-1billion.html}.
\newblock Accessed: 2021-05-19.

\bibitem[tax()]{taxi_dataset}
https://www1.nyc.gov/site/tlc/about/tlc-trip-record-data.page.
\newblock \url{https://www1.nyc.gov/site/tlc/about/tlc-trip-record-data.page}.
\newblock Accessed: 2022-05-14.

\bibitem[Aussal and Bakry(2019)]{aussal2019fast}
Matthieu Aussal and Marc Bakry.
\newblock The fast and free memory method for the efficient computation of
  convolution kernels, 2019.

\bibitem[Backurs et~al.(2021)Backurs, Indyk, Musco, and
  Wagner]{pmlr-v139-backurs21a}
Arturs Backurs, Piotr Indyk, Cameron Musco, and Tal Wagner.
\newblock Faster kernel matrix algebra via density estimation.
\newblock In Marina Meila and Tong Zhang, editors, \emph{Proceedings of the
  38th International Conference on Machine Learning}, volume 139 of
  \emph{Proceedings of Machine Learning Research}, pages 500--510. PMLR, 18--24
  Jul 2021.
\newblock URL \url{https://proceedings.mlr.press/v139/backurs21a.html}.

\bibitem[Barnes and Hut(1986)]{Barnes1986AHO}
J.~H. Barnes and Piet Hut.
\newblock A hierarchical o(n log n) force-calculation algorithm.
\newblock \emph{Nature}, 324:\penalty0 446--449, 1986.

\bibitem[Belley et~al.(2009)Belley, Belley, Colin, and Egli]{BELLEY20091253}
J.-M. Belley, P.~Belley, F.~Colin, and R.~Egli.
\newblock Non-smooth kernels for meshfree methods in fluid dynamics.
\newblock \emph{Computers |\& Mathematics with Applications}, 58\penalty0
  (6):\penalty0 1253--1272, 2009.
\newblock ISSN 0898-1221.
\newblock \doi{https://doi.org/10.1016/j.camwa.2009.06.002}.
\newblock URL
  \url{https://www.sciencedirect.com/science/article/pii/S0898122109003617}.

\bibitem[Berrut and Trefethen(2004)]{doi:10.1137/S0036144502417715}
Jean-Paul Berrut and Lloyd~N. Trefethen.
\newblock Barycentric lagrange interpolation.
\newblock \emph{SIAM Review}, 46\penalty0 (3):\penalty0 501--517, 2004.
\newblock \doi{10.1137/S0036144502417715}.
\newblock URL \url{https://doi.org/10.1137/S0036144502417715}.

\bibitem[Börm et~al.(2019)Börm, Lopez-Fernandez, and
  Sauter]{borm2019variable}
Steffen Börm, Maria Lopez-Fernandez, and Stefan Sauter.
\newblock Variable order, directional h2-matrices for helmholtz problems with
  complex frequency, 2019.

\bibitem[Cai et~al.(2017)Cai, Chow, Saad, and Xi]{cai2017smash}
Difeng Cai, Edmond Chow, Yousef Saad, and Yuanzhe Xi.
\newblock Smash: Structured matrix approximation by separation and hierarchy,
  2017.

\bibitem[Carrier et~al.(1988)Carrier, Greengard, and Rokhlin]{10.1137/0909044}
J.~Carrier, L.~Greengard, and V.~Rokhlin.
\newblock A fast adaptive multipole algorithm for particle simulations.
\newblock \emph{SIAM J. Sci. Stat. Comput.}, 9\penalty0 (4):\penalty0
  669–686, July 1988.
\newblock ISSN 0196-5204.
\newblock \doi{10.1137/0909044}.
\newblock URL \url{https://doi.org/10.1137/0909044}.

\bibitem[Charlier et~al.(2020)Charlier, Feydy, Glaun{\`e}s, Collin, and
  Durif]{charlier2020kernel}
Benjamin Charlier, Jean Feydy, Joan~Alexis Glaun{\`e}s, Fran{\c{c}}ois-David
  Collin, and Ghislain Durif.
\newblock Kernel operations on the {GPU}, with autodiff, without memory
  overflows.
\newblock \emph{arXiv preprint arXiv:2004.11127}, 2020.

\bibitem[Epperson(1987)]{10.2307/2323093}
James~F. Epperson.
\newblock On the runge example.
\newblock \emph{Am. Math. Monthly}, 94\penalty0 (4):\penalty0 329–341, April
  1987.
\newblock ISSN 0002-9890.
\newblock \doi{10.2307/2323093}.
\newblock URL \url{https://doi.org/10.2307/2323093}.

\bibitem[Fefferman et~al.(2013)Fefferman, Mitter, and Narayanan]{manifold}
Charles Fefferman, Sanjoy Mitter, and Hariharan Narayanan.
\newblock Testing the manifold hypothesis.
\newblock \emph{Journal of the American Mathematical Society}, 29, 10 2013.
\newblock \doi{10.1090/jams/852}.

\bibitem[for F3M()]{code}
Code for F3M.
\newblock \href{https://github.com/MrHuff/F3M}{https://github.com/MrHuff/F3M}.

\bibitem[Gardner et~al.(2018)Gardner, Pleiss, Bindel, Weinberger, and
  Wilson]{gardner2018gpytorch}
Jacob~R. Gardner, Geoff Pleiss, David Bindel, Kilian~Q. Weinberger, and
  Andrew~Gordon Wilson.
\newblock Gpytorch: Blackbox matrix-matrix gaussian process inference with gpu
  acceleration, 2018.

\bibitem[Garreau et~al.(2017)Garreau, Jitkrittum, and
  Kanagawa]{garreau2017large}
Damien Garreau, Wittawat Jitkrittum, and Motonobu Kanagawa.
\newblock Large sample analysis of the median heuristic, 2017.

\bibitem[Greengard et~al.(2020)Greengard, O'Neil, Rachh, and
  Vico]{greengard2020fast}
Leslie Greengard, Michael O'Neil, Manas Rachh, and Felipe Vico.
\newblock Fast multipole methods for evaluation of layer potentials with
  locally-corrected quadratures, 2020.

\bibitem[Hensman et~al.(2013)Hensman, Fusi, and
  Lawrence]{10.5555/3023638.3023667}
James Hensman, Nicol\`{o} Fusi, and Neil~D. Lawrence.
\newblock Gaussian processes for big data.
\newblock In \emph{Proceedings of the Twenty-Ninth Conference on Uncertainty in
  Artificial Intelligence}, UAI'13, page 282–290, Arlington, Virginia, USA,
  2013. AUAI Press.

\bibitem[Howell(1991)]{HOWELL1991164}
Gary~W Howell.
\newblock Derivative error bounds for lagrange interpolation: An extension of
  cauchy's bound for the error of lagrange interpolation.
\newblock \emph{Journal of Approximation Theory}, 67\penalty0 (2):\penalty0
  164--173, 1991.
\newblock ISSN 0021-9045.
\newblock \doi{https://doi.org/10.1016/0021-9045(91)90015-3}.
\newblock URL
  \url{https://www.sciencedirect.com/science/article/pii/0021904591900153}.

\bibitem[Kang and Wilcox(2015)]{kang2015mitigating}
Wei Kang and Lucas~C. Wilcox.
\newblock Mitigating the curse of dimensionality: Sparse grid characteristics
  method for optimal feedback control and hjb equations, 2015.

\bibitem[Kohnke et~al.(2020)Kohnke, Kutzner, Beckmann, Lube, Kabadshow,
  Dachsel, and Grubmüller]{cuda_ffm-2021}
Bartosz Kohnke, Carsten Kutzner, Andreas Beckmann, Gert Lube, Ivo Kabadshow,
  Holger Dachsel, and Helmut Grubmüller.
\newblock A cuda fast multipole method with highly efficient m2l far field
  evaluation.
\newblock \emph{The International Journal of High Performance Computing
  Applications}, 35:\penalty0 109434202096485, 10 2020.
\newblock \doi{10.1177/1094342020964857}.

\bibitem[Leaf and Kaper(1974)]{10.2307/2156076}
Gary~K. Leaf and Hans~G. Kaper.
\newblock $l^\infty$-error bounds for multivariate lagrange approximation.
\newblock \emph{SIAM Journal on Numerical Analysis}, 11\penalty0 (2):\penalty0
  363--381, 1974.
\newblock ISSN 00361429.
\newblock URL \url{http://www.jstor.org/stable/2156076}.

\bibitem[Meagher(1980)]{octree_cite}
Donald Meagher.
\newblock Octree encoding: A new technique for the representation, manipulation
  and display of arbitrary 3-d objects by computer.
\newblock 10 1980.

\bibitem[Meanti et~al.(2020)Meanti, Carratino, Rosasco, and Rudi]{falkon}
Giacomo Meanti, Luigi Carratino, Lorenzo Rosasco, and Alessandro Rudi.
\newblock Kernel methods through the roof: Handling billions of points
  efficiently.
\newblock In H.~Larochelle, M.~Ranzato, R.~Hadsell, M.~F. Balcan, and H.~Lin,
  editors, \emph{Advances in Neural Information Processing Systems}, volume~33,
  pages 14410--14422. Curran Associates, Inc., 2020.
\newblock URL
  \url{https://proceedings.neurips.cc/paper/2020/file/a59afb1b7d82ec353921a55c579ee26d-Paper.pdf}.

\bibitem[Paszke et~al.(2019)Paszke, Gross, Massa, Lerer, Bradbury, Chanan,
  Killeen, Lin, Gimelshein, Antiga, Desmaison, Kopf, Yang, DeVito, Raison,
  Tejani, Chilamkurthy, Steiner, Fang, Bai, and Chintala]{NEURIPS2019_9015}
Adam Paszke, Sam Gross, Francisco Massa, Adam Lerer, James Bradbury, Gregory
  Chanan, Trevor Killeen, Zeming Lin, Natalia Gimelshein, Luca Antiga, Alban
  Desmaison, Andreas Kopf, Edward Yang, Zachary DeVito, Martin Raison, Alykhan
  Tejani, Sasank Chilamkurthy, Benoit Steiner, Lu~Fang, Junjie Bai, and Soumith
  Chintala.
\newblock Pytorch: An imperative style, high-performance deep learning library.
\newblock In H.~Wallach, H.~Larochelle, A.~Beygelzimer, F.~d\textquotesingle
  Alch\'{e}-Buc, E.~Fox, and R.~Garnett, editors, \emph{Advances in Neural
  Information Processing Systems 32}, pages 8024--8035. Curran Associates,
  Inc., 2019.

\bibitem[Rudi et~al.(2018)Rudi, Carratino, and Rosasco]{rudi2018falkon}
Alessandro Rudi, Luigi Carratino, and Lorenzo Rosasco.
\newblock Falkon: An optimal large scale kernel method, 2018.

\bibitem[Scholkopf and Smola(2001)]{10.5555/559923}
Bernhard Scholkopf and Alexander~J. Smola.
\newblock \emph{Learning with Kernels: Support Vector Machines, Regularization,
  Optimization, and Beyond}.
\newblock MIT Press, Cambridge, MA, USA, 2001.
\newblock ISBN 0262194759.

\bibitem[Schwab and Wendland(1992)]{https://doi.org/10.1002/mana.19921560113}
C.~Schwab and W.~L. Wendland.
\newblock Kernel properties and representations of boundary integral operators.
\newblock \emph{Mathematische Nachrichten}, 156\penalty0 (1):\penalty0
  187--218, 1992.
\newblock \doi{https://doi.org/10.1002/mana.19921560113}.
\newblock URL
  \url{https://onlinelibrary.wiley.com/doi/abs/10.1002/mana.19921560113}.

\bibitem[Smolyak(1963)]{smolyak-1963}
S.~A. Smolyak.
\newblock Quadrature and interpolation formulas for tensor products of certain
  class of functions.
\newblock \emph{Dokl. Akad. Nauk SSSR}, 148\penalty0 (5):\penalty0 1042--1053,
  1963.
\newblock Transl.: Soviet Math. Dokl. 4:240-243, 1963.

\bibitem[Titsias(2009)]{pmlr-v5-titsias09a}
Michalis Titsias.
\newblock Variational learning of inducing variables in sparse gaussian
  processes.
\newblock In David van Dyk and Max Welling, editors, \emph{Proceedings of the
  Twelth International Conference on Artificial Intelligence and Statistics},
  volume~5 of \emph{Proceedings of Machine Learning Research}, pages 567--574,
  Hilton Clearwater Beach Resort, Clearwater Beach, Florida USA, 16--18 Apr
  2009. PMLR.
\newblock URL \url{https://proceedings.mlr.press/v5/titsias09a.html}.

\bibitem[Tward et~al.(2020)Tward, Brown, Kageyama, Patel, Hou, Mori, Albert,
  Troncoso, and Miller]{10.3389/fnins.2020.00052}
Daniel Tward, Timothy Brown, Yusuke Kageyama, Jaymin Patel, Zhipeng Hou, Susumu
  Mori, Marilyn Albert, Juan Troncoso, and Michael Miller.
\newblock Diffeomorphic registration with intensity transformation and missing
  data: Application to 3d digital pathology of alzheimer's disease.
\newblock \emph{Frontiers in Neuroscience}, 14, 2020.
\newblock ISSN 1662-453X.
\newblock \doi{10.3389/fnins.2020.00052}.
\newblock URL
  \url{https://www.frontiersin.org/article/10.3389/fnins.2020.00052}.

\bibitem[Wang et~al.(2019{\natexlab{a}})Wang, Pleiss, Gardner, Tyree,
  Weinberger, and Wilson]{alex2019exact}
Ke~Alexander Wang, Geoff Pleiss, Jacob~R. Gardner, Stephen Tyree, Kilian~Q.
  Weinberger, and Andrew~Gordon Wilson.
\newblock Exact gaussian processes on a million data points,
  2019{\natexlab{a}}.

\bibitem[Wang et~al.(2019{\natexlab{b}})Wang, Krasny, and
  Tlupova]{wang2019kernelindependent}
Lei Wang, Robert Krasny, and Svetlana Tlupova.
\newblock A kernel-independent treecode based on barycentric lagrange
  interpolation, 2019{\natexlab{b}}.

\bibitem[Wilson and Nickisch(2015{\natexlab{a}})]{pmlr-v37-wilson15}
Andrew Wilson and Hannes Nickisch.
\newblock Kernel interpolation for scalable structured gaussian processes
  (kiss-gp).
\newblock In Francis Bach and David Blei, editors, \emph{Proceedings of the
  32nd International Conference on Machine Learning}, volume~37 of
  \emph{Proceedings of Machine Learning Research}, pages 1775--1784, Lille,
  France, 07--09 Jul 2015{\natexlab{a}}. PMLR.
\newblock URL \url{http://proceedings.mlr.press/v37/wilson15.html}.

\bibitem[Wilson and Nickisch(2015{\natexlab{b}})]{10.5555/3045118.3045307}
Andrew~Gordon Wilson and Hannes Nickisch.
\newblock Kernel interpolation for scalable structured gaussian processes
  (kiss-gp).
\newblock In \emph{Proceedings of the 32nd International Conference on
  International Conference on Machine Learning - Volume 37}, ICML'15, page
  1775–1784. JMLR.org, 2015{\natexlab{b}}.

\bibitem[Yang et~al.(2003)Yang, Duraiswami, Gumerov, and Davis]{1238383}
Yang, Duraiswami, Gumerov, and Davis.
\newblock Improved fast gauss transform and efficient kernel density
  estimation.
\newblock In \emph{Proceedings Ninth IEEE International Conference on Computer
  Vision}, pages 664--671 vol.1, 2003.
\newblock \doi{10.1109/ICCV.2003.1238383}.

\end{thebibliography}
\newpage

\section*{Checklist}

\begin{enumerate}

\item For all authors...
\begin{enumerate}
  \item Do the main claims made in the abstract and introduction accurately reflect the paper's contributions and scope?
    \answerYes{}
  \item Did you describe the limitations of your work?
    \answerYes{}
  \item Did you discuss any potential negative societal impacts of your work?
    \answerYes{}
  \item Have you read the ethics review guidelines and ensured that your paper conforms to them?
    \answerYes{}
\end{enumerate}

\item If you are including theoretical results...
\begin{enumerate}
  \item Did you state the full set of assumptions of all theoretical results?
    \answerYes{}
        \item Did you include complete proofs of all theoretical results?
    \answerYes{} See Appendix \ref{short_claim}, \ref{sm_field_theory}, \ref{complexity_appendix_2}.
\end{enumerate}

\item If you ran experiments...
\begin{enumerate}
  \item Did you include the code, data, and instructions needed to reproduce the main experimental results (either in the supplemental material or as a URL)?
    \answerYes{}
  \item Did you specify all the training details (e.g., data splits, hyperparameters, how they were chosen)?
    \answerYes{}
        \item Did you report error bars (e.g., with respect to the random seed after running experiments multiple times)?
    \answerYes{}
        \item Did you include the total amount of compute and the type of resources used (e.g., type of GPUs, internal cluster, or cloud provider)?
    \answerYes{}
\end{enumerate}

\item If you are using existing assets (e.g., code, data, models) or curating/releasing new assets...
\begin{enumerate}
  \item If your work uses existing assets, did you cite the creators?
    \answerNA{}
  \item Did you mention the license of the assets?
    \answerNA{}
  \item Did you include any new assets either in the supplemental material or as a URL?
    \answerNA{}
  \item Did you discuss whether and how consent was obtained from people whose data you're using/curating?
    \answerNA{}
  \item Did you discuss whether the data you are using/curating contains personally identifiable information or offensive content?
    \answerNA{}
\end{enumerate}

\item If you used crowdsourcing or conducted research with human subjects...
\begin{enumerate}
  \item Did you include the full text of instructions given to participants and screenshots, if applicable?
    \answerNA{}
  \item Did you describe any potential participant risks, with links to Institutional Review Board (IRB) approvals, if applicable?
    \answerNA{}
  \item Did you include the estimated hourly wage paid to participants and the total amount spent on participant compensation?
    \answerNA{}
\end{enumerate}

\end{enumerate}

\clearpage

\appendix





\section{Synthetic data}

\textbf{Note on synthetic datasets} \quad We generated synthetic datasets of different types to measure the ability of $\fthreem$ to deal with dense or sparse data. Dense datasets were generated as independent samples with either uniform or normal distributions. Clustered datasets were generated by sampling cluster centers from a normal distribution, and then recursively sampling sub-cluster centers from a normal distribution with reduced standard deviation and centered at each cluster center, until the desired number of points is attained. Fractional Brownian Motion and Brownian Motion samples were generated as samplings of Fractional Brownian Motion paths with respective Hurst index 0.75 and 0.5. Figure \ref{fig:datasets_illustr} shows samples of each dataset type in the 2D case.

\begin{figure*}
    \centering
    \begin{subfigure}[b]{0.3\textwidth}
        \includegraphics[width=\linewidth]{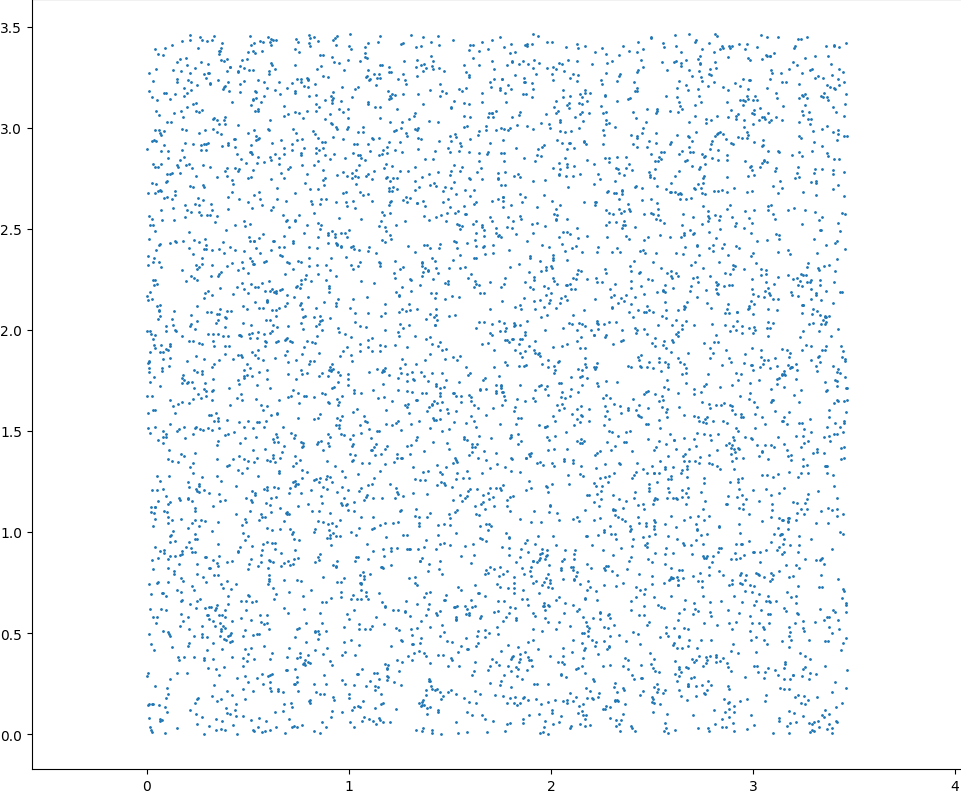}
        \caption{Uniform dataset}
    \end{subfigure}\hfill%
        \begin{subfigure}[b]{0.3\textwidth}
        \includegraphics[width=\linewidth]{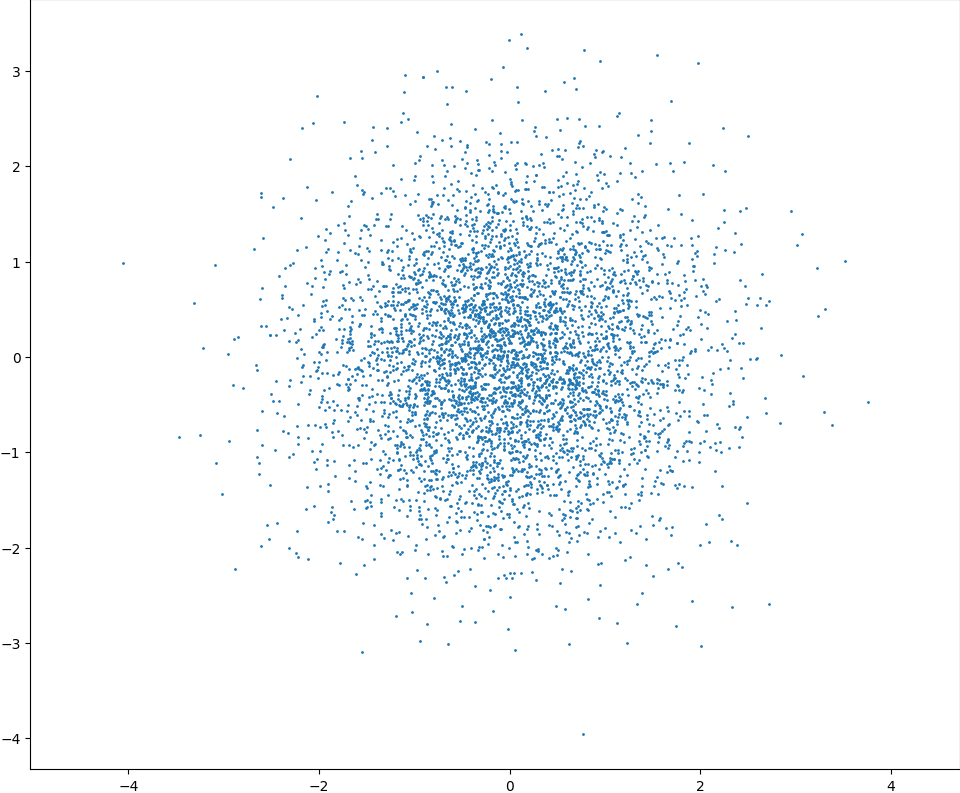}
                \caption{Normal dataset}
    \end{subfigure}\hfill%
    \begin{subfigure}[b]{0.3\textwidth}
        \includegraphics[width=\linewidth]{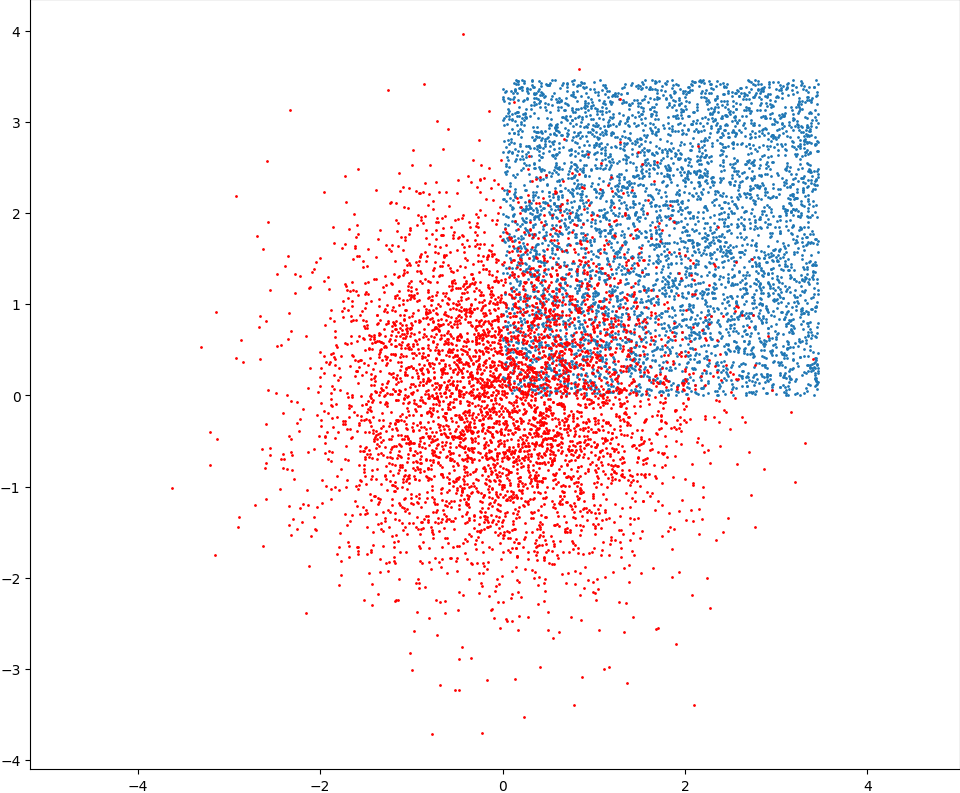}
                        \caption{Uniform and Normal dataset}
    \end{subfigure}\hfill
        \begin{subfigure}[b]{0.3\textwidth}
        \includegraphics[width=\linewidth]{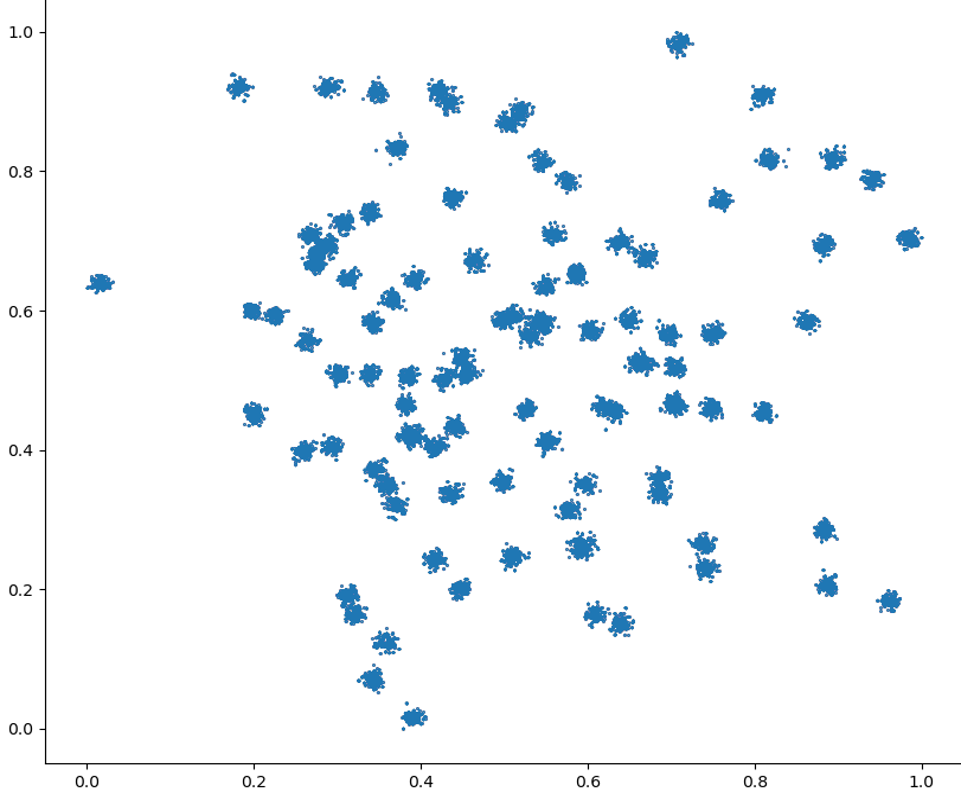}
        \caption{Clustered dataset}
    \end{subfigure}\hfill%
        \begin{subfigure}[b]{0.3\textwidth}
        \includegraphics[width=\linewidth]{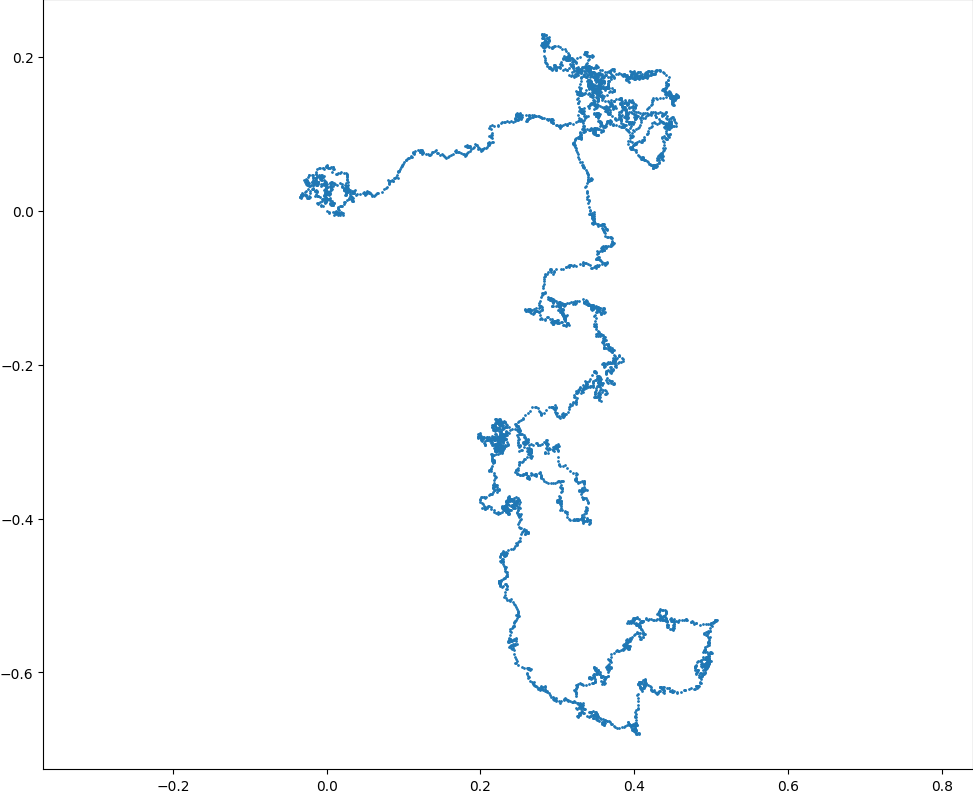}
                \caption{Fractional Brownian Motion}
    \end{subfigure}\hfill%
    \begin{subfigure}[b]{0.3\textwidth}
        \includegraphics[width=\linewidth]{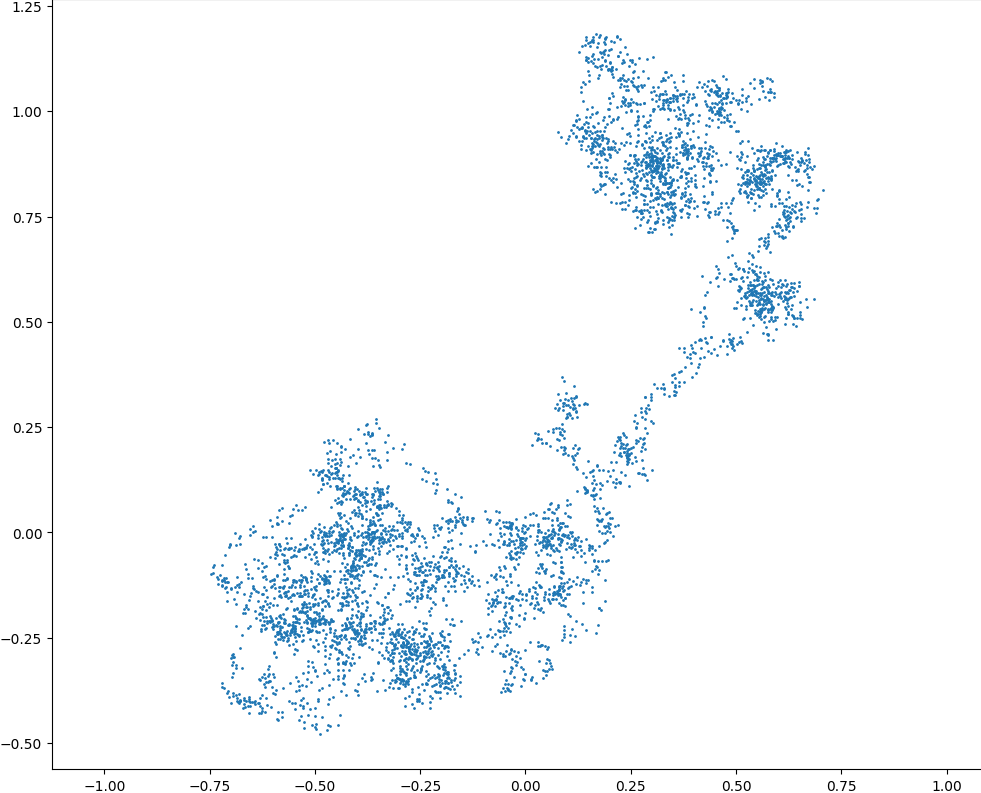}
                        \caption{Brownian motion}
    \end{subfigure}\hfill
    \caption{\small{2D illustrations of the synthetic datasets.}}
    \label{fig:datasets_illustr}
\end{figure*}

\section{Note on maximal variance on an interval}
\label{short_claim}

\begin{proposition}
Consider a random variable $X \in \mathbb{R}$ with finite  variance with $m=\inf X$ and $M=\sup X$. Then $\text{Var}(X)\leq \frac{(M-m)^2}{4}$. 
\end{proposition}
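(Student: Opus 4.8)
The plan is to exploit the variational characterization of the variance: among all constants $c \in \mathbb{R}$, the mean $\mathbb{E}X$ minimizes the expected squared deviation, so that
$$\text{Var}(X) = \mathbb{E}\!\left[(X - \mathbb{E}X)^2\right] \leq \mathbb{E}\!\left[(X - c)^2\right] \quad \text{for every } c.$$
This inequality is immediate from the expansion $\mathbb{E}[(X-c)^2] = \text{Var}(X) + (\mathbb{E}X - c)^2 \geq \text{Var}(X)$, and the freedom to replace $\mathbb{E}X$ by any convenient constant is exactly what unlocks the bound. (If either $m=-\infty$ or $M=+\infty$ the claim is trivial, so I would first assume $m,M$ finite, in which case $X$ is bounded and the finite-variance hypothesis holds automatically.)

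Next I would choose $c$ to be the midpoint $\frac{m+M}{2}$ of the range. Since $m \leq X \leq M$ almost surely, the shifted variable satisfies the pointwise bound $\left|X - \tfrac{m+M}{2}\right| \leq \tfrac{M-m}{2}$, and therefore $\left(X - \tfrac{m+M}{2}\right)^2 \leq \tfrac{(M-m)^2}{4}$ almost surely. Taking expectations and chaining with the variational inequality from the first step gives
$$\text{Var}(X) \leq \mathbb{E}\!\left[\Big(X - \tfrac{m+M}{2}\Big)^2\right] \leq \frac{(M-m)^2}{4},$$
which is the claim.

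There is no genuine obstacle here: once the center is chosen to be the midpoint, everything collapses to a one-line pointwise estimate followed by monotonicity of expectation. The only point demanding a sentence of care is the minimization property $\text{Var}(X)\le \mathbb{E}[(X-c)^2]$, and even that reduces to completing the square. I would close by remarking that the constant $\tfrac14$ is sharp, being attained by the two-point distribution placing mass $\tfrac12$ at each endpoint $m$ and $M$; this confirms the inequality cannot be tightened and pins down exactly why $\tfrac{\mathcal{E}^2}{4}$ is the correct per-dimension variance bound used in the smoothness criterion.
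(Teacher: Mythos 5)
Your proof is correct and follows essentially the same route as the paper's: both rest on the variational inequality $\operatorname{Var}(X) \leq \mathbb{E}[(X-c)^2]$ specialized to the midpoint $c = \frac{m+M}{2}$, followed by a pointwise bound and monotonicity of expectation. The only cosmetic differences are that you establish the minimization property by completing the square where the paper differentiates $g(t)=\mathbb{E}[(X-t)^2]$, and you bound $\bigl(X-\frac{m+M}{2}\bigr)^2$ directly by $\frac{(M-m)^2}{4}$ where the paper routes through the identity $\bigl((X-m)+(X-M)\bigr)^2 \leq \bigl((X-m)-(X-M)\bigr)^2$; your closing remarks on the degenerate case $M-m=\infty$ and on sharpness via the symmetric two-point law are welcome additions not present in the paper.
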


\begin{proof}
Define a function $g$ by $g(t)=\mathbb{E}\left[(X-t)^{2}\right]$. Computing the derivative $g^{\prime}$, and solving $g^{\prime}(t)=-2 \mathbb{E}[X]+2 t=0$ yields that $g$ achieves its minimum at $t=\mathbb{E}[X]$ (note that $g^{\prime \prime}>0$ ).
Now, consider the value of the function $g$ at the special point $t=\frac{M+m}{2} .$ It must be the case that
$\operatorname{Var}[X]=g(\mathbb{E}[X]) \leq g\left(\frac{M+m}{2}\right).
$
Evaluating yields the expression
\begin{equation*}
    \begin{split}
        &g\left(\frac{M+m}{2}\right)=\\&\mathbb{E}\left[\left(X-\frac{M+m}{2}\right)^{2}\right]=\frac{1}{4} \mathbb{E}\left[((X-m)+(X-M))^{2}\right]
    \end{split}
\end{equation*}
Since $X-m \geq 0$ and $X-M \leq 0$, we have
$$
((X-m)+(X-M))^{2} \leq((X-m)-(X-M))^{2}=(M-m)^{2}
$$
implying that
\begin{equation*}
    \begin{split}
        &\frac{1}{4} \mathbb{E}\left[((X-m)+(X-M))^{2}\right] \leq \\&\frac{1}{4} \mathbb{E}\left[((X-m)-(X-M))^{2}\right]=\frac{(M-m)^{2}}{4}
    \end{split}
\end{equation*}
Hence
$$
\operatorname{Var}[X] \leq \frac{(M-m)^{2}}{4}
$$ 

\end{proof}

\section{Details on barycentric Lagrange interpolation}
\label{lag_appendix}
The barycentric lagrange interpolation is written as

$$L_{i}(t)=\frac{\frac{w_{i}}{t-s_{i}}}{\sum_{i=0}^{r} \frac{w_{i}}{t-s_{i}}},  w_{i}=\frac{1}{\prod_{j=0, j \neq i}^{r}\left(s_{i}-s_{j}\right)}, i=0, \ldots, n
$$
where $w_i$ are known as the barycentric weights. In case of singularities, i.e. when $t=s_j$, we set $L_i(s_j)=\delta_{ij}$. In particular, \cite{doi:10.1137/S0036144502417715} proposes Chebyshev nodes of the second kind $s_{i}=\cos \theta_{i}, \quad \theta_{i}=\frac{i \pi}{r}, \quad i=0, \ldots, r$.
This choice of nodes combined with the scale invariance property of the barycentric form makes the calculation of $w_i$ particularly easy 
$$
w_{i}=(-1)^{i} \delta_{i}, \quad \delta_{i}=\left\{\begin{array}{ll}
1 / 2, & i=0 \text { or } i=r \\
1, & i=1, \ldots, r-1
\end{array}\right.
$$
and reduces the complexity of calculating $w_i$ from $\mathcal{O}(r^2)$ to $\mathcal{O}(r)$. The Lagrange interpolation polynomial can then be expressed as $p_{r}(t)=\sum_{i=0}^{r} \frac{\frac{w_{i}}{t-s_{i}}}{\sum_{i=0}^{r} \frac{w_{i}}{t-s_{i}}} f_{i}$.

\section{Smooth field proof}
\label{sm_field_theory}
\begin{proposition*}
Consider $\mathbf{x},\mathbf{y} \in \mathcal{X} \subset \mathbb{R}^d$ such that $d(\mathbf{x},\mathbf{y}):=\frac{\Vert \mathbf{x}-\mathbf{y}\Vert^2}{2\gamma^2}=\frac{1}{2\gamma^2}\sum_{i}^d(x^{(i)}- y^{(i)})^2 \leq \eta<1$ for all $\mathbf{x},\mathbf{y}$. When interpolating $k(\mathbf{x},\mathbf{y}) = \exp\left(-d(\mathbf{x},\mathbf{y})\right)$ using bivariate Lagrange interpolation $\mathcal{L}_r(\mathbf{x},\mathbf{y}):=\mathbf{L}_{X}^{T} \cdot \mathbf{K} \cdot \left(\mathbf{L}_{Y} \cdot \mathbf{b}\right)$ with degree $r=2p$, for any $p\in \mathbb{N}_{>0}$ there exist nodes $\mathbf{s}^{\mathbf{x}},\mathbf{s}^{\mathbf{y}}$ for $\mathcal{L}_r(\mathbf{x},\mathbf{y})$ such that the pointwise interpolation error is bounded by $\mathcal{O}(\eta^{p+1})$.
\begin{proof}
 Note that $k(\mathbf{x},\mathbf{y})$ is analytic in $d(\mathbf{x},\mathbf{y})$ with Taylor expansion given by
    $T_p e^{-d(\mathbf{x},\mathbf{y})}=1-d(\mathbf{x},\mathbf{y})+...+\mathcal{O}\left(d(\mathbf{x},\mathbf{y})^{p+1}\right)$. With $d(\mathbf{x},\mathbf{y})\leq \eta$, it follows that $|T_p k(\mathbf{x},\mathbf{y})-k(\mathbf{x},\mathbf{y})|\leq \mathcal{O}(\eta^{p+1})$. Using triangle inequality, we have $|\mathcal{L}_r(\mathbf{x},\mathbf{y}) - k(\mathbf{x},\mathbf{y})|\leq |T_p k(\mathbf{x},\mathbf{y})-k(\mathbf{x},\mathbf{y})| + |T_p k(\mathbf{x},\mathbf{y})-\mathcal{L}_r(\mathbf{x},\mathbf{y})| \leq \eta^{p+1} + |T_p k(\mathbf{x},\mathbf{y})-\mathcal{L}_r(\mathbf{x},\mathbf{y})|$. We note that $\mathcal{L}_r(\mathbf{x},\mathbf{y})$ contains all the terms of the Taylor expansion, and we can thus choose the nodes $\mathbf{s}^{\mathbf{x}},\mathbf{s}^{\mathbf{y}}$ of $\mathcal{L}_r(\mathbf{x},\mathbf{y})$ such that $|T_p k(\mathbf{x},\mathbf{y})-\mathcal{L}_r(\mathbf{x},\mathbf{y})|=0$ as long as $r=2p$, meaning the polynomial orders are matched. 
\end{proof}
Note that the same proof strategy can be applied to any kernel $k$ that admits a Taylor expansion.
\end{proposition*}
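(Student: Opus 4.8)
The plan is to split the interpolation error into an analytic Taylor-truncation term and a term controlled purely by the interpolation operator. First I would record the analytic estimate: since $t \mapsto e^{-t}$ is entire, its order-$p$ Taylor polynomial $T_p$ obeys $|e^{-t} - T_p e^{-t}| = O(t^{p+1})$, and substituting $t = d(\mathbf{x},\mathbf{y}) \le \eta$ yields a \emph{uniform} bound $\|k - T_p k\|_\infty = O(\eta^{p+1})$ over the pair of boxes. This is precisely where the hypothesis $d(\mathbf{x},\mathbf{y}) \le \eta < 1$ enters, both to ensure the expansion is meaningful and to make the remainder small.

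Next I would exploit a key algebraic fact. Because $d(\mathbf{x},\mathbf{y}) = \frac{1}{2\gamma^2}\sum_i (x^{(i)} - y^{(i)})^2$ has degree at most $2$ in each scalar coordinate, the power $d^k$ has coordinate-degree at most $2k$, and hence $T_p k = \sum_{k=0}^{p} (-d)^k/k!$ is a polynomial of degree at most $2p = r$ in every coordinate of $\mathbf{x}$ and of $\mathbf{y}$ separately. Writing $\Pi_r$ for the tensor-product Lagrange interpolation operator with $r+1$ nodes per coordinate (so that $\mathcal{L}_r(\mathbf{x},\mathbf{y}) = \Pi_r k$), I would then use that $\Pi_r$ is a linear projection reproducing exactly any polynomial of coordinate-degree $\le r$; in particular $\Pi_r(T_p k) = T_p k$.

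Combining the two, set $R_p := k - T_p k$ so that $\|R_p\|_\infty = O(\eta^{p+1})$. By linearity of $\Pi_r$,
\[
\mathcal{L}_r(\mathbf{x},\mathbf{y}) - k(\mathbf{x},\mathbf{y}) = \Pi_r(T_p k) - T_p k + \Pi_r R_p - R_p = \Pi_r R_p - R_p,
\]
so the polynomial part cancels and only the interpolated remainder survives. The pointwise estimate then follows from $|\Pi_r R_p - R_p| \le (1 + \Lambda_r)\,\|R_p\|_\infty$, where $\Lambda_r$ is the Lebesgue constant of the node set; choosing Chebyshev nodes of the second kind (as used elsewhere in the paper) keeps $\Lambda_r = O(\log r)$ bounded for the fixed $r = 2p$, which fixes the required nodes $\mathbf{s}^{\mathbf{x}},\mathbf{s}^{\mathbf{y}}$ and gives $|\mathcal{L}_r - k| = O(\eta^{p+1})$.

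The main obstacle is this last step rather than the algebra: one must control $\Pi_r R_p$, the interpolant of the Taylor remainder, which does \emph{not} vanish at the nodes. The tempting shortcut of declaring this interpolant to be zero is not justified; the honest route bounds it through the Lebesgue constant, which is exactly why the Chebyshev node choice matters. A secondary point worth checking carefully is the per-coordinate (as opposed to total) degree count for $T_p k$, since it is this finer bound that licenses exact reproduction by the \emph{tensor-product} interpolant $\Pi_r$.
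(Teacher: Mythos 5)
Your proof is correct, and it shares the paper's skeleton in the first half---Taylor-truncate $e^{-d}$ at order $p$, use $d \le \eta < 1$ to get $\|k - T_p k\|_\infty = \mathcal{O}(\eta^{p+1})$, and observe that since $d$ is quadratic in each scalar coordinate, $T_p k$ has coordinate-degree at most $2p = r$---but it diverges at the decisive step, in a way that actually repairs the paper's argument. The paper finishes by asserting that, because ``polynomial orders are matched,'' one can choose nodes so that $|T_p k - \mathcal{L}_r| = 0$ identically; taken literally this cannot hold, since $\mathcal{L}_r$ interpolates $k$ (not $T_p k$), so $\mathcal{L}_r \equiv T_p k$ would force the Taylor remainder $R_p = k - T_p k$ to vanish at every node pair $(\mathbf{s}^x_i, \mathbf{s}^y_j)$, which happens only where $d = 0$. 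Your route instead applies the interpolation operator $\Pi_r$ to the decomposition $k = T_p k + R_p$: the reproduction property $\Pi_r(T_p k) = T_p k$ (which is what degree matching actually licenses, via the per-coordinate degree count you rightly flag) gives $\mathcal{L}_r - k = \Pi_r R_p - R_p$, and the Lebesgue-constant estimate $|\Pi_r R_p - R_p| \le (1+\Lambda_r)\,\|R_p\|_\infty$ yields $\mathcal{O}(\eta^{p+1})$ because $\Lambda_r$ depends only on $r$ and the node geometry, never on $\eta$. What this buys is an unconditional proof for any concrete node family with controlled Lebesgue constant (e.g.\ Chebyshev of the second kind, as the paper itself uses), in place of an existence claim that cannot be satisfied exactly. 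One minor imprecision: for the tensor-product operator acting on all coordinates of $\mathbf{x}$ and $\mathbf{y}$ jointly, the Lebesgue constant is a product of univariate ones, hence of order $(\log r)^{2d}$ rather than $\log r$; this is immaterial here since $r = 2p$ and $d$ are fixed and the bound is asymptotic in $\eta$ only.
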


\section{Complexity}
\label{complexity_appendix_2}


\begin{proposition*}
A far-field interaction between two boxes containing $n_x$ and $n_y$ points respectively has time complexity $\mathcal{O}(n)$, where $n=\max(n_x,n_y)$.
\end{proposition*}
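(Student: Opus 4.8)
The plan is to exploit the factored form of the far-field approximation, $\mathbf{v} \approx \mathbf{L}_{X}^{T} \cdot (\mathbf{K} \cdot (\mathbf{L}_{Y} \cdot \mathbf{b}))$, and simply count the arithmetic operations incurred by evaluating it in the prescribed right-to-left order. The central observation is that the inner matrix $\mathbf{K} = k(\mathbf{s}^x, \mathbf{s}^y)$ is evaluated only on the interpolation grids, so its size $r_X \times r_Y$ depends only on the chosen polynomial degree $r$ and the dimension $D$, and never on $n_x$ or $n_y$. Hence I would treat $r_X$ and $r_Y$ as fixed constants throughout and show that each of the three sequential products is linear in $n := \max(n_x, n_y)$.

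Concretely, I would proceed through the three stages in order. First, $\mathbf{v}_1 = \mathbf{L}_{Y} \cdot \mathbf{b}$ multiplies the $r_Y \times n_y$ interpolation matrix by the length-$n_y$ weight vector, costing $\mathcal{O}(r_Y \cdot n_y)$. Second, $\mathbf{v}_2 = \mathbf{K} \cdot \mathbf{v}_1$ multiplies the $r_X \times r_Y$ node-kernel matrix by the length-$r_Y$ vector $\mathbf{v}_1$, costing $\mathcal{O}(r_X \cdot r_Y)$, a constant. Third, $\mathbf{v} = \mathbf{L}_{X}^{T} \cdot \mathbf{v}_2$ multiplies the $n_x \times r_X$ matrix by the length-$r_X$ vector, costing $\mathcal{O}(r_X \cdot n_x)$. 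Summing, the total is $\mathcal{O}(r_Y n_y + r_X r_Y + r_X n_x)$, and absorbing the constant node counts leaves $\mathcal{O}(n_x + n_y) = \mathcal{O}(n)$.

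I would also account for the one-time assembly of $\mathbf{L}_{X}$ and $\mathbf{L}_{Y}$, since these matrices are not free. Using the barycentric form with the precomputed Chebyshev weights (Appendix \ref{lag_appendix}), each of the $n_y$ columns of $\mathbf{L}_{Y}$ requires evaluating $r_Y$ barycentric terms over $D$ dimensions, i.e. $\mathcal{O}(r_Y \cdot D)$ work per point and $\mathcal{O}(r_Y \cdot D \cdot n_y)$ in total; the analogous bound $\mathcal{O}(r_X \cdot D \cdot n_x)$ holds for $\mathbf{L}_{X}$. With $r_X$, $r_Y$, and $D$ fixed, these assembly costs are again linear in $n$, so they do not change the conclusion.

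The only real subtlety, and the step I would state most carefully, is that the node counts $r_X, r_Y$, while exponential in $D$ (or slower under the sparse-grid construction), are constants with respect to the data size $n$; once this is made explicit, the entire argument reduces to the routine observation that the algorithm never instantiates the full $n_x \times n_y$ kernel block and instead performs only matrix-vector products whose non-constant dimension appears linearly. No series-expansion or interpolation-error analysis is needed here, so I do not expect any genuine obstacle beyond this bookkeeping.
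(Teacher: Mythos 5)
Your proof is correct and follows essentially the same argument as the paper's: evaluate the factored product $\mathbf{L}_{X}^{T}\cdot(\mathbf{K}\cdot(\mathbf{L}_{Y}\cdot\mathbf{b}))$ right-to-left, with costs $\mathcal{O}(n_y\cdot r_Y)$, $\mathcal{O}(r_X\cdot r_Y)$, and $\mathcal{O}(n_x\cdot r_X)$, and conclude linearity since $r_X,r_Y$ are independent of $n_x,n_y$. Your additional accounting for the assembly cost of $\mathbf{L}_X$ and $\mathbf{L}_Y$ is a careful touch the paper leaves implicit, but it does not change the route of the argument.
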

\begin{proof}
Far-field interactions are calculated as

\begin{equation}
\mathbf{v} \approx \underbrace{\mathbf{L}_{X}^{T} \cdot}_{\mathcal{O}(n_x\cdot r_X)} (\underbrace{\mathbf{K} \cdot}_{\mathcal{O}(r_X\cdot r_Y) }\underbrace{\left(\mathbf{L}_{Y} \cdot \mathbf{b}\right)}_{\mathcal{O}(n_y\cdot r_Y)  }).
\end{equation}
As $r_X,r_Y$ are independent of $n_x,n_y$, the complexity becomes $\mathcal{O}(n)$. Further see \cite{aussal2019fast} for alternative proof.
\end{proof}

\begin{proposition*}
Given $n$ data points in dimension $D$, the maximum number of divisions $\text{Tree}_{\texttt{max divisions}}$ is given by 
\begin{equation}
    \text{Tree}_{\texttt{max divisions}} = \log_{2^D}(n).
\end{equation}
\end{proposition*}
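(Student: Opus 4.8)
The plan is to count leaf boxes as a function of recursion depth and then identify the depth at which further subdivision no longer resolves the point set. First I would fix the branching factor of the partition: at each division step, every box is a $D$-dimensional hypercube that is bisected along each of its $D$ coordinate axes, producing $2^D$ equally sized children. Consequently, starting from the single enclosing box of edge $\mathcal{E}$ and applying $i$ successive divisions, the number of leaf boxes is exactly $(2^D)^i = 2^{D i}$.

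Next I would pin down the stopping criterion for the recursion. As remarked immediately after the statement, a near-field interaction between two boxes that each contain a single point already has $\mathcal{O}(1)$ cost, so subdividing below one point per box yields no further benefit — it only manufactures empty boxes and spurious interactions. Hence the meaningful maximum depth is the number of divisions at which the leaf boxes become fine enough to isolate the points, i.e. when the number of leaf boxes first matches the number of points. Setting $2^{D i} = n$ and solving for $i$ gives $i = \log_{2^D}(n) = \tfrac{1}{D}\log_2 n$, which is the claimed expression.

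The main obstacle is conceptual rather than computational: the identity $2^{Di}=n$ presumes a \emph{balanced} occupancy in which each leaf holds on the order of one point, whereas for adversarial or highly clustered configurations a box may still contain many nearly coincident points at this depth, or conversely many boxes may already be empty. I would therefore frame $\log_{2^D}(n)$ as the maximum depth under the balanced (roughly uniform) regime — equivalently, the smallest depth at which the box count reaches $n$ — which is precisely the quantity needed in the complexity derivation that follows. Once this stopping depth is identified, the remaining step is the one-line logarithm computation, and combining it with the per-division far-field cost $\mathcal{O}(n)$ from the preceding proposition yields the $\mathcal{O}(n\log n)$ bound quoted for FFM.
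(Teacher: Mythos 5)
Your proof is correct and follows essentially the same route as the paper: the paper simply solves $n/2^{D\cdot i}=1$ for $i$, which is literally the same equation as your $2^{Di}=n$, only phrased as ``points per box reaches one'' rather than ``box count reaches $n$.'' Your additional framing of the stopping criterion and the caveat about non-uniform occupancy make explicit an assumption the paper leaves implicit, but they do not change the argument.
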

\begin{proof}
To see this, simply solve for $$
\frac{n}{2^{D\cdot \text{Tree}_{\texttt{max divisions}}}}=1 \Longrightarrow  \text{Tree}_{\texttt{max divisions}} = \log_{2^D}(n).
$$Further see \cite{aussal2019fast} for alternative proof.
\end{proof}

\begin{theorem*}
Given a KMVM with edge $\mathcal{E}$ (dependent on data $\mathcal{X},\mathcal{Y}$), lengthscale $\gamma$, effective variance limit $\eta$, $n$ data points and data dimension $D$, $\fthreem$ has time complexity $\mathcal{O}(n\cdot \log_2\left(\frac{D \cdot  \mathcal{E}^2}{\gamma^2 \cdot 4 \cdot \eta} \right))$, which can be taken as $\mathcal{O}(n\cdot \log_2\left(\frac{C}{\eta} \right))$ where $C\propto \frac{D\cdot \mathcal{E}^2}{\gamma^2}$. 
\begin{proof}
Recall that near-field interactions can be smoothly interpolated when $D\cdot \frac{\mathcal{E}^2}{\gamma^2\cdot 4\cdot 2^{\texttt{tree\_depth}}}\leq \eta$. Then all interactions will be interpolated when $\texttt{tree\_depth}\geq \log_2\left(\frac{D \cdot  \mathcal{E}^2}{\gamma^2 \cdot 4 \cdot \eta} \right)$, which implies we can take $\text{Tree}_{\text{max divisions}}=\log_2\left(\frac{D \cdot  \mathcal{E}^2}{\gamma^2 \cdot 4 \cdot \eta} \right)$. Hence the complexity is $\mathcal{O}(n\cdot \log_2\left(\frac{D \cdot  \mathcal{E}^2}{\gamma^2 \cdot 4 \cdot \eta} \right))$.
\end{proof}
\end{theorem*}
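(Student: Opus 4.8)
The plan is to mirror the FFM complexity derivation recalled just above the theorem --- total cost equals (number of divisions) $\times$ (per-level far-field cost) --- but to replace the data-dependent bound $\log_{2^D}(n)$ on the number of divisions by a quantity independent of $n$. The leverage point is the \emph{smoothness criterion}: once the boxes at a given $\texttt{tree\_depth}$ are small enough that \emph{every} remaining interaction is classified as smooth, the recursion can terminate, so the effective maximum depth is capped by the smallest depth at which smoothness holds uniformly over all boxes rather than by the depth at which boxes contain a single point.

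First I would pin down that critical depth. At depth $t$ each box is a hypercube of edge $l=\mathcal{E}/2^{t}$, so by the maximal-variance bound of \Cref{short_claim} the effective variance of any box along one coordinate is at most $l^2/4$ regardless of how the data happen to be distributed inside it, giving total effective variance at most $D\,l^2/4$. Substituting this worst-case bound into the smoothness criterion $\tfrac{\mathrm{EV}(B_p^X)+\mathrm{EV}(B_q^Y)}{2\gamma^2}\le\eta$ and solving for $t$ shows that every interaction is guaranteed smooth once $\tfrac{D\,\mathcal{E}^2}{4\gamma^2}\,2^{-t}\le\eta$, i.e. once $t\ge\log_2\!\left(\tfrac{D\,\mathcal{E}^2}{4\gamma^2\eta}\right)$. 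Hence we may take $\text{Tree}_{\text{max divisions}}=\log_2\!\left(\tfrac{D\,\mathcal{E}^2}{4\gamma^2\eta}\right)$, a quantity that depends only on $D,\mathcal{E},\gamma,\eta$ and not on $n$.

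Next I would combine this cap with the per-level cost. By the far-field proposition each far-field interaction between two boxes costs $\mathcal{O}(\max(n_x,n_y))$; since each box admits only a constant (dimension-dependent) number of partners in its interaction list and each point lies in exactly one box at a fixed level, the far-field work summed over all box pairs at that depth telescopes to $\mathcal{O}(n)$. Multiplying the per-level cost $\mathcal{O}(n)$ by the capped depth gives the claimed $\mathcal{O}\!\left(n\log_2\!\left(\tfrac{D\,\mathcal{E}^2}{4\gamma^2\eta}\right)\right)$, which is $\mathcal{O}\!\left(n\log_2(C/\eta)\right)$ with $C\propto D\mathcal{E}^2/\gamma^2$; the residual near-field interactions at the terminal level involve boxes with $\mathcal{O}(1)$ points and thus contribute only $\mathcal{O}(n)$ overall.

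The step I expect to be the main obstacle is the termination argument in the second paragraph: one must justify that the worst-case bound $D\,l^2/4$ holds \emph{uniformly} across every box at depth $t$ --- which is exactly what makes the cap $n$-independent --- and that satisfying the smoothness criterion genuinely lets the recursion stop rather than merely reclassifying an interaction. This is precisely where the data-agnostic bound of \Cref{short_claim} is essential; without it the critical depth would inherit a dependence on the empirical spread of the points and hence potentially on $n$. A secondary point worth flagging is that the smoothness cap should formally be taken as $\min$ with the FFM bound $\log_{2^D}(n)$, so that the stated rate is an upper bound in the regime where the data are dense enough to reach the smooth scale before the subdivisions are exhausted.
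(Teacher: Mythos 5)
Your proposal is correct and follows essentially the same route as the paper's proof: bound the maximum tree depth by the $n$-independent quantity $\log_2\bigl(\tfrac{D\mathcal{E}^2}{4\gamma^2\eta}\bigr)$ at which the smoothness criterion (via the maximal-variance bound of \Cref{short_claim}) holds for every remaining interaction, then multiply by the $\mathcal{O}(n)$ per-level far-field cost. Your additional care --- making the uniformity of the variance bound explicit and taking the minimum with the FFM depth bound $\log_{2^D}(n)$ --- only tightens the paper's terser argument, it does not change the approach.
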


\begin{theorem*}
The number of interactions $M_i$ against tree depth $i$ of FFM and $\fthreem$ grows as $\mathcal{O}\left(M_{i-1}2^{2\cdot D }- m_{i}^{\text{far}} \right)$ and $$\mathcal{O}\left(M_{i-1} 2^{2\cdot D}- (m_i^{\text{empty}})^2 - m_{i}^{\text{far}}-m_{i}^{\text{smooth}}-m_{i}^{\text{small}}  \right)$$ respectively.  Here $M_{-1} = \frac{1}{2^{2D}}$ and $m_{0}^{\text{far}}=m_{0}^{\text{smooth}}=m_{0}^{\text{small}}=m_0^{\text{empty}}=0$ and $m_{i}^{\text{far}},m_{i}^{\text{smooth}},m_{i}^{\text{small}},m_i^{\text{empty}}$ denotes the number of far-field, smooth field, small field interactions and the number of empty boxes respectively at depth $i>0$. Note that for $i>0$, these are dependent on data.
\begin{proof}
We prove through induction that the recursion holds for $\fthreem$. We start with the base case $M_{0}=1$, since at depth $i=0$, we only have one box and hence only one interaction. $M_0 = M_{-1}2^{2\cdot D} -(m_0^{\text{empty}})^2 - m_{0}^{\text{far}}-m_{0}^{\text{smooth}}-m_{0}^{\text{small}} = 1$. Clearly at depth 0, there can not be any empty boxes or possible approximations. For the induction step, $M_i= M_{i-1}2^{2\cdot D} -(m_i^{\text{empty}})^2 - m_{i}^{\text{far}}-m_{i}^{\text{smooth}}-m_{i}^{\text{small}}$. To get $M_{i+1}$, each box at depth $i$ is first divided into $2^D$, hence the number of interactions grows by $2^{2\cdot D}$. At depth $i+1$, we can further remove $(m_{i+1}^{\text{empty}})^2$ interactions between empty boxes and further compute $m_{i+1}^{\text{far}}+m_{i+1}^{\text{smooth}}+m_{i+1}^{\text{small}}$ interactions. Then $M_{i+1}=(M_{i-1}2^{2\cdot D} -(m_i^{\text{empty}})^2 - m_{i}^{\text{far}}-m_{i}^{\text{smooth}}-m_{i}^{\text{small}})2^{2\cdot D} -(m_{i+1}^{\text{empty}})^2 - m_{i+1}^{\text{far}}-m_{i+1}^{\text{smooth}}-m_{i+1}^{\text{small}} = M_i 2^{2\cdot D} -(m_{i+1}^{\text{empty}})^2 - m_{i+1}^{\text{far}}-m_{i+1}^{\text{smooth}}-m_{i+1}^{\text{small}}$. Thus the base case and induction step holds which completes our proof. This proof also covers FFM, since FFM can be as a special case for $\fthreem$ without removing empty boxes, smooth field and small field computation.
\end{proof}
\end{theorem*}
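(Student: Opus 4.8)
The plan is to prove both recurrences simultaneously by induction on the tree depth $i$, treating the FFM recurrence as the special case of the $\fthreem$ one in which empty, smooth, and small-field interactions are never discarded. Here $M_i$ counts the box-box interactions that survive to depth $i$, that is, the near-field pairs that were not already resolved (either computed exactly or pruned) at a shallower level and hence must be subdivided again.

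First I would establish the base case $i=0$. At depth zero there is a single enclosing box, so exactly one self-interaction, giving $M_0 = 1$. I would check that the stated formula reproduces this: with $M_{-1} = 2^{-2D}$ and $m_0^{\text{empty}} = m_0^{\text{far}} = m_0^{\text{smooth}} = m_0^{\text{small}} = 0$, one obtains $M_0 = M_{-1}\,2^{2D} - 0 = 1$, so the convention $M_{-1} = 2^{-2D}$ is exactly what is needed to seed the recursion.

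Next comes the inductive step, which I would split into a growth contribution and a pruning contribution. For the growth: refining the tree by one level replaces every box by its $2^D$ children, so a surviving interaction between two boxes $B_p^X, B_q^Y$ at depth $i$ expands into the $2^D \times 2^D = 2^{2D}$ interactions between their respective children, yielding the multiplicative term $M_i\,2^{2D}$. For the pruning: at depth $i+1$ we drop the interactions between pairs of empty boxes, of which there are at most $(m_{i+1}^{\text{empty}})^2$ (any pair involving an empty box contributes nothing, and bounding these by all ordered pairs of empty boxes gives the squared count), and we resolve the $m_{i+1}^{\text{far}}$, $m_{i+1}^{\text{smooth}}$, $m_{i+1}^{\text{small}}$ interactions by approximation or exact computation so that none of them propagate. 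Subtracting these from $M_i\,2^{2D}$ gives precisely the stated recurrence for $M_{i+1}$, and deleting the $(m^{\text{empty}})^2$, $m^{\text{smooth}}$, $m^{\text{small}}$ terms recovers the FFM case.

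The main obstacle I anticipate is the bookkeeping in the inductive step rather than any sharp estimate: I must ensure the $2^{2D}$ factor is applied only to the surviving near-field interactions (not to those already pruned), and that the empty-box term is correctly interpreted as a \emph{count of pairs}, hence the square, and absorbed into the $\mathcal{O}(\cdot)$ as an upper bound rather than an exact deletion. Because the statement is only asymptotic, mild over-counting in these pruning terms is harmless, which is exactly what lets the clean recurrence close.
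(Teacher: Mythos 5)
Your proposal follows essentially the same route as the paper's own proof: induction on tree depth with base case $M_0=1$ seeded by the convention $M_{-1}=2^{-2D}$, a multiplicative $2^{2D}$ growth factor from splitting each box into $2^D$ children, subtraction of the $(m_{i+1}^{\text{empty}})^2$, $m_{i+1}^{\text{far}}$, $m_{i+1}^{\text{smooth}}$, $m_{i+1}^{\text{small}}$ terms at the new depth, and recovery of the FFM recurrence as the special case where the extra pruning terms vanish. Your added remarks on interpreting the empty-box term as a count of pairs and on the $\mathcal{O}(\cdot)$ absorbing mild over-counting simply make explicit bookkeeping that the paper leaves implicit.
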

\section{Algorithm summary}

We present a summary of the algorithm presented in FFM in \Cref{FFM_aussal} and the \textbf{modifications $\fthreem$ does in boldface}.

\begin{algorithm}[H]
\SetAlgoLined
\KwIn{Datasets $\mathbf{X},\mathbf{Y},\mathbf{b}$, kernel $k$, average points threshold $\zeta$ }
\KwResult{$\textbf{v} =  k(\mathbf{X},\mathbf{Y})\cdot \mathbf{b}$}
 Initialize treecodes $\tau_x = T(\mathbf{X}),\quad \tau_y = T(\mathbf{Y})$\\ Initialize near-field interactions as $\mathcal{I}_{\text{near}}=\{0,0\}$\\
 Initialize output $\mathbf{v}=\mathbf{0}$\\
 \While{ $\mid\mathcal{I}\mid>0$ and \textbf{${\texttt{MaximumBoxSize}}(\tau_x)>\zeta$} and \textbf{$\texttt{MaximumBoxSize}(\tau_y)>\zeta$ }}{
  Divide $\tau_x, \tau_y$\\
  Calculate interactions left $\mathcal{I}:=f(\mathcal{I}_{\text{near}})$\\
  Partition $\mathcal{I}$ to $\{\mathcal{I}_{\text{near}},\mathcal{I}_{\text{far}},{\boldsymbol{\mathcal{I}_{\text{smooth}}}}\}$\\ {\textbf{Throw away interactions that are too far from eachother}}\\
  Compute far-field interactions $\mathbf{v}\mathrel{+}=\text{FarFieldCompute}(\tau_x,\tau_y,\mathcal{I}_{\text{far}})$ \\{\textbf{ Compute smooth field interactions $\mathbf{v}\mathrel{+}=\text{FarFieldCompute}(\tau_x,\tau_y,\mathcal{I}_{\text{smooth}})$ }}\;
 }
 Compute remaining near-field interactions $\mathbf{v}\mathrel{+}=\text{NearFieldCompute}(\tau_x,\tau_y,\mathcal{I}_{\text{near}})$
 \caption{FFM (\textbf{$\fthreem$})}
 \label{FFM_aussal}
\end{algorithm}
Instead of comparing the average box size to $\zeta$ we compare the maximum box size. When points are non-uniformly distributed, taking the maximum ensures that we don't compute near-field interactions on boxes with many points, since it will be inefficient.  
\section{Scalability Analysis}
\label{scalability_analysis}

We conduct a scalability analysis over $N_{\text{GPU}}=1,2,4,8$. We parallelize the KMVM product by considering the $k(\mathbf{X},\mathbf{X})$-case and divide the work onto multiple GPUs by partitioning each subproduct of the KMVM (see for \Cref{parallel} an example when $N_{\text{GPU}}=8$). We take $\mathbf{X}$ to be Uniform and 3 dimensional. We present results in \Cref{scal}.
\begin{figure}[H]
    \centering
    \includegraphics[width=0.7\linewidth]{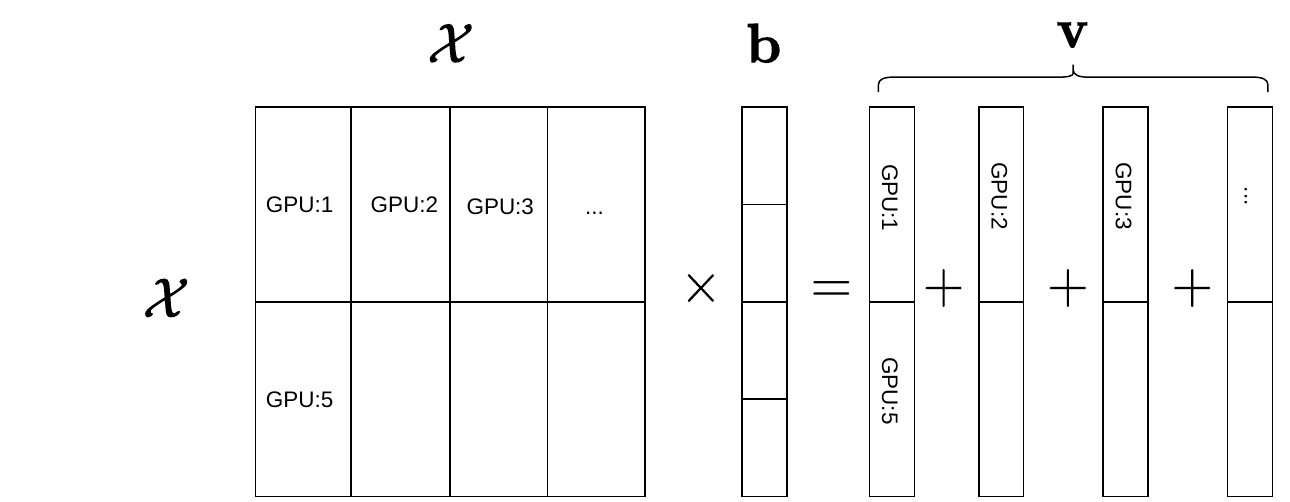}
    \caption{\small{Partitioning a KMVM product into 8 jobs}}
    \label{parallel}
\end{figure}
\begin{figure}[H]
    \centering
    \includegraphics[width=0.7\linewidth]{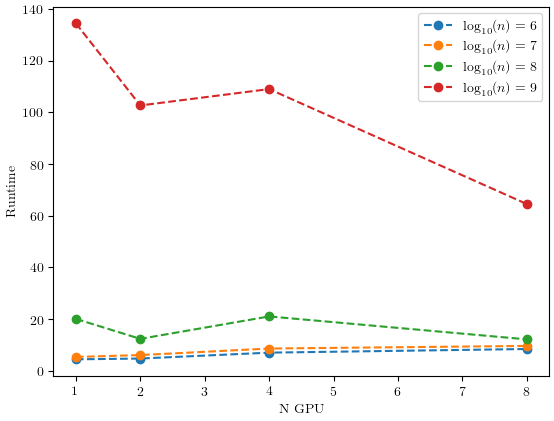}
    \caption{Since the V100 cards we use have very high throughput, we only get a performance boost when $n=10^9$.}
    \label{scal}
\end{figure}

We also use \texttt{nvprof} to analyze the \% of peak throughput of the V100 cards $\fthreem$ can utilize. We run \texttt{nvprof} for 3 dimensional uniform data for $n=10^6,10^7,10^8,10^9$. We present our results in \Cref{flops}.

\begin{figure}[H]
    \centering
    \includegraphics[width=0.65\linewidth]{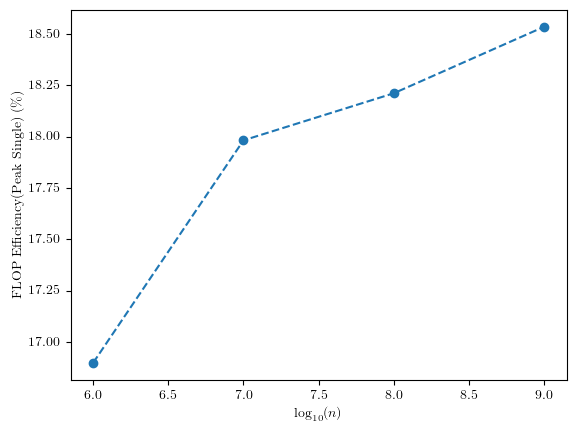}
    \caption{\small{We used the \texttt{flop\_sp\_efficiency} metric in \texttt{nvprof} to generate the plot. One GPU was used for this experiment.}}
    \label{flops}
\end{figure}

\section{Impact of $\eta$ and $r$ on performance}
The performance of $\fthreem$ is tuned by choosing $\eta$ and $r$ to trade speed against accuracy. In \Cref{param_impact} we plot how different choices of $\eta$ and $r$ impacts computation time for $\fthreem$ on 3D data.

 \begin{figure}[H]
    \centering
    \begin{subfigure}[b]{0.5\textwidth}
        \includegraphics[width=\linewidth]{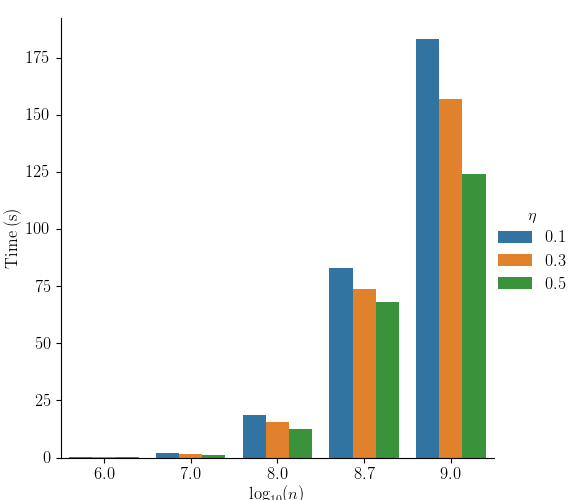}
    \end{subfigure}%
        \begin{subfigure}[b]{0.5\textwidth}
        \includegraphics[width=\linewidth]{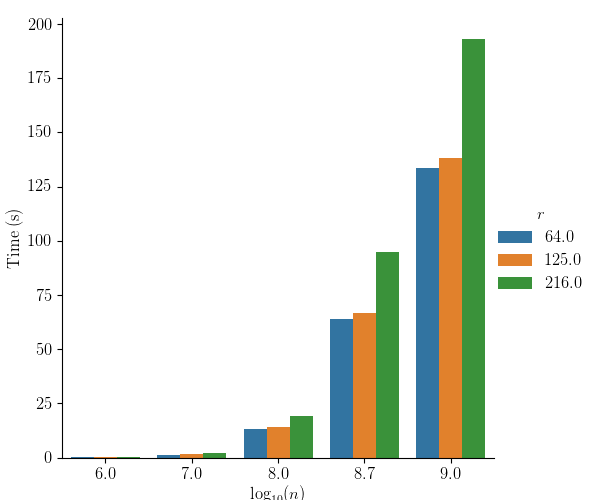}
    \end{subfigure}

    \caption{\small{We see that larger $\eta$, more aggressive \texttt{smoothness criteria} and smaller number of interpolation nodes $r$ improve speed.}}
    \label{param_impact}
\end{figure}

\section{Implementation overview}
\label{implementation_details}
We provide a skiss of how data is stored and used for $\fthreem$ in \Cref{big_fig}.
\begin{figure}[H]
    \centering
    \includegraphics[width=\linewidth]{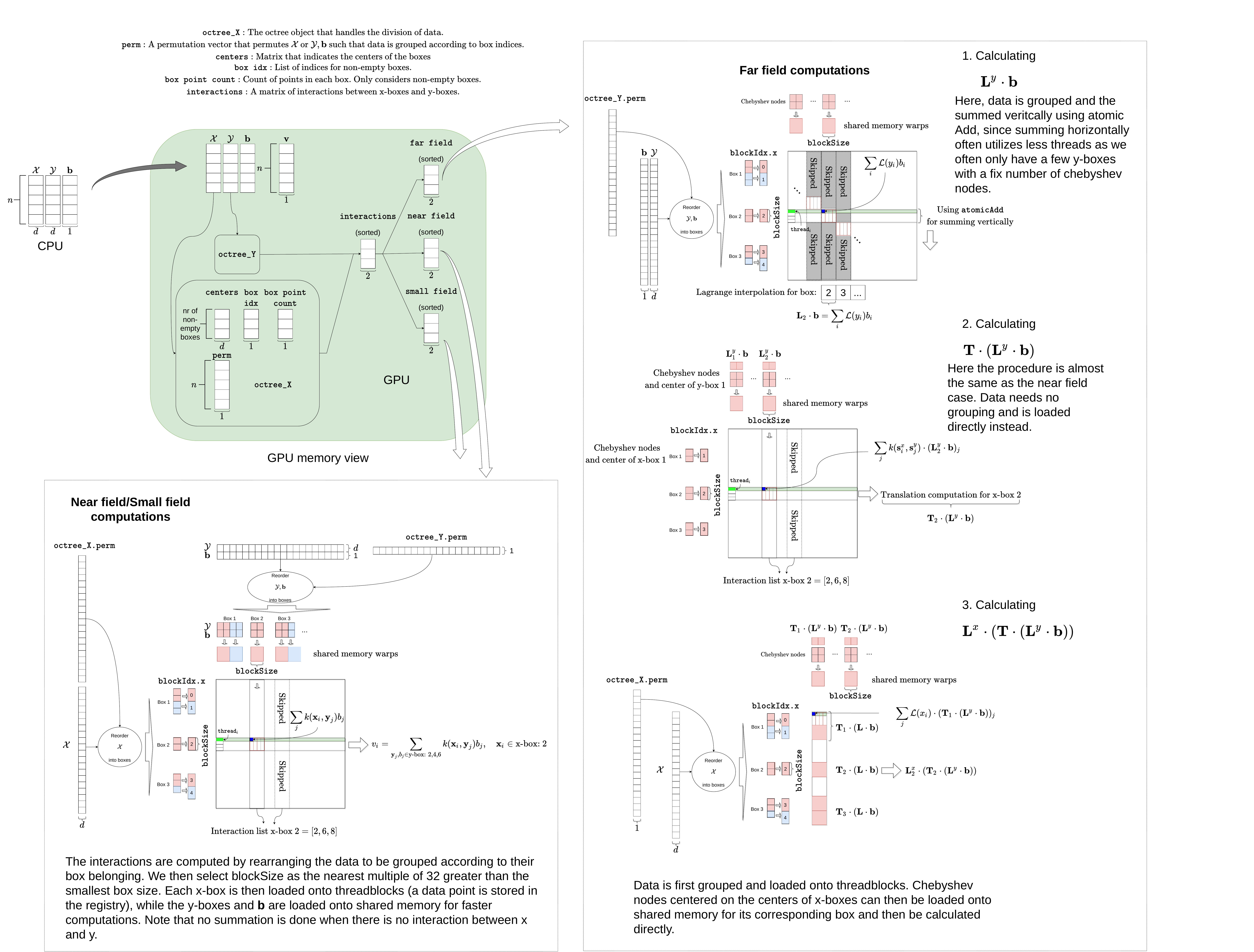}
    \caption{\small{Skiss of how data is stored and used on GPU.}}
    \label{big_fig}
\end{figure}

We provide an illustration on near field computations are carried out for $\fthreem$ in \Cref{near_field_ref}.
\begin{figure}[H]
    \centering
    \includegraphics[width=0.9\linewidth]{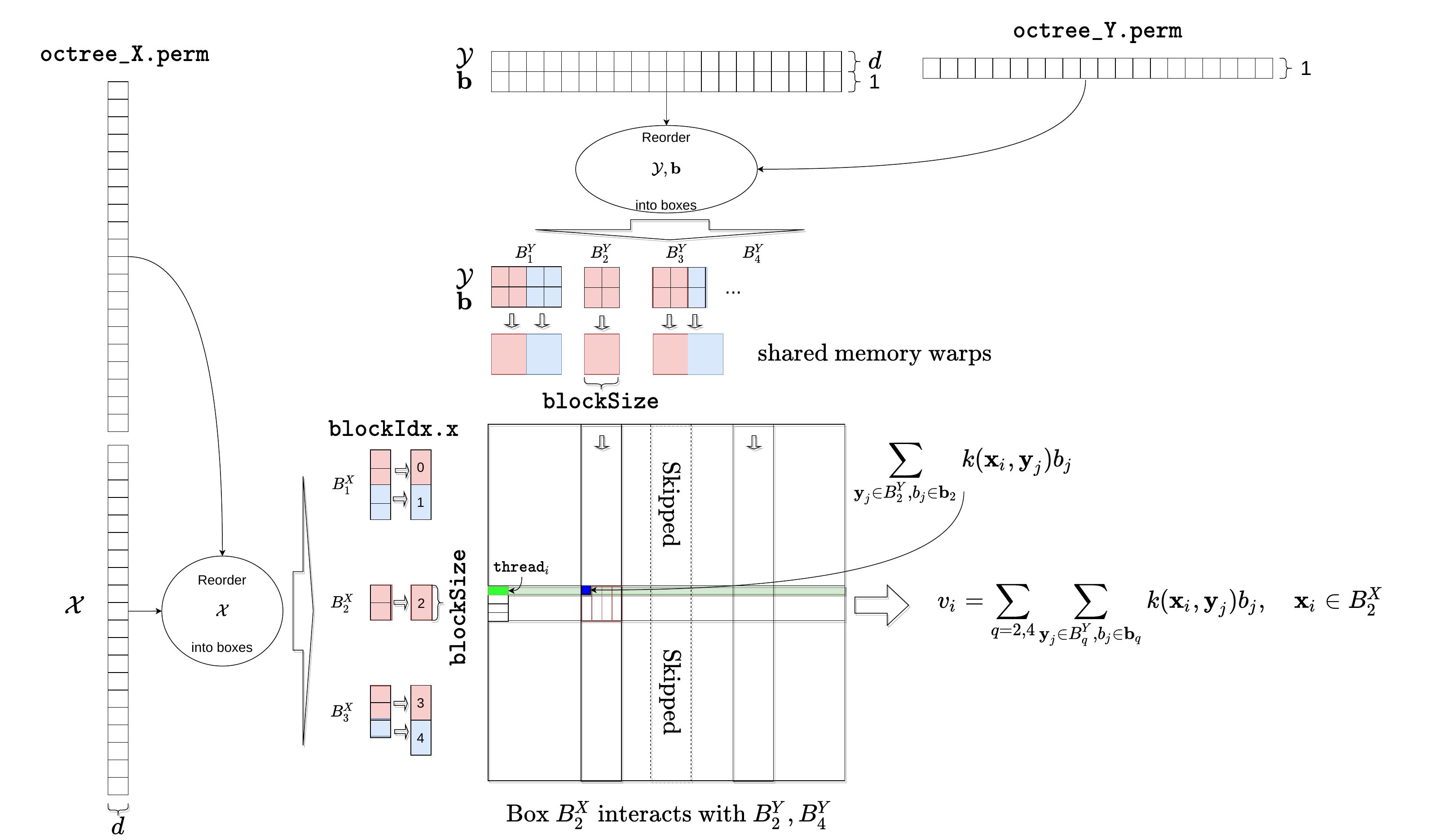}
    \caption{\small{Illustration of how near-field interactions are computed in parallel on GPU. First data is reordered into their corresponding boxes using a permutation vector. Then each box is loaded in parallel into several thread blocks (indexed by $\texttt{blockIdx.x}$), wherein the challenge lies to execute this correctly. The computations are then parallelized across blocks, where only interactions are computed.}}
    \label{near_field_ref}
\end{figure}

\end{document}